%
%
\documentclass{amsart}
\usepackage[nohug]{diagrams}
\usepackage[mathscr]{eucal}

\usepackage{amsfonts}
\usepackage{amsmath}
\usepackage{amsthm}
\usepackage{amssymb}
\usepackage{latexsym}

\newtheorem{theorem}[]{Theorem}
\newtheorem{proposition}[]{Proposition}

\newtheorem{lemma}[]{Lemma}
\theoremstyle{definition}

\newtheorem{remark}[]{Remark}

\newcommand{\beq}{\begin{equation}\label}

%
%


\newcommand{\id}{{{\mathtt {Id}}}}
\newcommand{\im}{{{\mathtt {Im}}}}

\newcommand{\into}{\,\,\hookrightarrow\,\,}

\newcommand{\onto}{\,\,\twoheadrightarrow\,\,}

\newcommand{\Rep}{{\mathtt{Rep}}}
\newcommand{\Irr}{{\mathtt{Irr}}}

\newcommand{\End}{{\mathtt{End}}}

\newcommand{\Hom}{{\mathtt{Hom}}}

\newcommand{\Tor}{{\mathtt{Tor}}}

\newcommand{\Mod}{{\mathtt{Mod}}}

\newcommand{\Ker}{{\mathtt{Ker}}}

\newcommand{\Ann}{{\mathtt{Ann}}}
\renewcommand{\dim}{{\mathtt{dim}}}

\newcommand{\rk}{{\mathtt{Rk}}}

\newcommand{\tr}{{{\mathtt{Tr}}}}

\newcommand{\GL}{\mathtt{GL}}
\newcommand{\G}{\mathtt{G}}
\newcommand{\SL}{\mathtt{SL}}
\newcommand{\LL}{\mathtt{L}}
\newcommand{\SSp}{\mathtt{Sp}}

\newcommand{\e}{{\mathbf{e}}}

\newcommand{\RR}{{\mathcal{R}}}
\newcommand{\AB}{{\mathcal{A}}}

\newcommand{\bGamma}{\boldsymbol{\Gamma}}

\newcommand{\ee}{{\mathfrak{e}}}

\newcommand{\A}{{\mathsf{A}}_{\infty}}

\newcommand{\la}{\label}

\newcommand{\bxi}{\boldsymbol{\xi}}

\def\C{{\mathbb{C}}}
\def\V{{\bar{V}}}
\def\X{{\bar{X}}}
\def\Y{{\bar{Y}}}
\def\w{{\bar{w}}}
\def\v{{\bar{v}}}
\def\e{{\bar{e}}}

\def\R{{\mathcal{R}}}

\def\Z{{\mathbb{Z}}}

\def\bn{\boldsymbol{n}}
\def\N{\mathcal{N}}
\def\NN{\mathbb{N}}

\def\O{{\mathcal O}}

\def\LL{{\mathbf{L}}}

\def\tpi{\tilde{\pi}}
\def\g{{\mathfrak{g}}}
\def\h{{\mathfrak{h}}}
\def\b{{\mathfrak{b}}}

\def\sll2{{\mathfrak{s}\mathfrak{l}}_2}

\def\CC{{\mathcal{C}}}

\def\DD{\Delta}

\def\S{{\mathcal S}}


\begin{document}
\title[Recollement of Deformed Preprojective Algebras]{Recollement
of Deformed Preprojective Algebras and the Calogero-Moser
Correspondence}
\keywords{Weyl algebra, Calogero-Moser space, preprojective algebra, 
recollement, Cherednik algebra, Kleinian singularity}
\subjclass[2000]{Primary 16S32, 16S38; Secondary 16G20, 17B10}
\author{Yuri Berest}
\address{Department of Mathematics,
Cornell University, Ithaca, NY 14853-4201, USA}
\email{berest@math.cornell.edu}
\thanks{Research partially supported by NSF grant DMS 04-07502.}
\author{Oleg Chalykh}
\address{Department of Mathematics, University of Leeds, Leeds LS2 9JT, UK}
\email{oleg@maths.leeds.ac.uk}
\author{Farkhod Eshmatov}
\address{Department of Mathematics,
Cornell University, Ithaca, NY 14853-4201, USA}
\email{eshmatov@math.cornell.edu} \maketitle

\vspace{-0.2in}

\begin{center}
\textit{To Vitya Ginzburg for his 50th birthday}
\end{center}

\vspace{0.1in}

\section{Introduction}

The aim of this paper is to clarify the relation between the
following objects: $\,(a)\,$ rank $1$ projective modules (ideals)
over the first Weyl algebra $ A_1(\C)$; $\,(b)\,$ simple modules
over deformed preprojective algebras $\,\Pi_{\lambda}(Q)\,$
introduced by Crawley-Boevey and Holland \cite{CBH}; and $\, (c)
\,$ simple modules over the rational Cherednik algebras
$\,H_{0,c}(S_n)\,$ associated to symmetric groups \cite{EG}. The
isomorphism classes of each type of these objects can be 
parametrized naturally by the same space (namely, the Calogero-Moser 
algebraic varieties); however, no natural functors
between the corresponding module categories seem to be known. We
construct such functors by translating our earlier results on
$\A$-modules over $ A_1 $ (see \cite{BC}) to a more familiar
setting of representation theory. 

In the last section we extend our construction to the case of Kleinian 
singularities $\, \C^2\!/\Gamma \,$, where $ \Gamma $ is a 
finite cyclic subgroup of $ \SL(2, \C) $. In this case, the Weyl algebra 
is replaced by a ``quantized coordinate ring'' $\, O_{\tau}(\Gamma)\,$ 
of $\, \C^2\!/\Gamma \,$, the Calogero-Moser spaces by certain quiver
varieties $ \CC_{\bn, \tau}(\Gamma) $, and the rational Cherednik algebra by 
a symplectic reflection algebra $ H_{0,k,c}(\bGamma_n) $ associated
to the $n$-th wreath product $\, \boldsymbol{\Gamma}_n = S_n \ltimes
\Gamma^n\,$ 
(see \cite{BGK1, BGK2}).

We should mention that the question of explaining the ``mysterious''
bijection between the ideal classes of $ A_{1} $ and simple modules 
of $\,H_{0,c}(S_n)\,$ (and more generally, a similar bijection for the algebras 
$\, O_{\tau}(\Gamma)\,$ and $ H_{0,k,c}(\bGamma_n)
$) was first raised in \cite{EG} and emphasized further in \cite{BGK2} 
(see Remark~1.1 in {\it loc.\!\! cit.}).

In the end, we would like to thank W.~Crawley-Boevey, I.~Gordon and
G.~Wilson for interesting questions and comments. We are also grateful
to V.~Ginzburg for explaining to us some of the results of \cite{EGGO}. 
The results of this paper were announced at a conference on Interactions
between Noncommutative Algebra and Geometry in
Oberwolfach in May 2006 (see \cite{B}). The first author takes this
opportunity to thank the organizers T.~Stafford and M.~van den Bergh for
inviting him to speak at this conference.

\section{The Calogero-Moser Correspondence}

\subsection{The Calogero-Moser spaces}
\la{CMS} For an integer $\, n \geq 0 \,$, let $ \tilde{\CC}_n $ be
the space of linear maps
$$
\{(\X, \Y, \v, \w) \,:\, \X, \Y \in \End(\C^n) \, , \, \v \in
\Hom(\C,\C^n) , \, \w \in \Hom(\C^n, \C)\} \ ,
$$
satisfying the equation $\, [\X,\,\Y] + \id + \v\,\w = 0 \,$. The
group $\, \GL(n, \C) \,$ acts on $\, \tilde{\CC}_n \,$ in the
natural way:
$$
(\X, \Y, \v, \w) \mapsto (g\X g^{-1}, \,g\Y g^{-1},\, g \v,\,\w
g^{-1}) , \ g \in \GL(n, \C)\ ,
$$
and we define the $n$-th {\it Calogero-Moser space} $\, \CC_n \,$
to be the quotient variety $\, \tilde{\CC}_n/\!/\GL(n,\C) \,$
modulo this action. It turns out that $\, \GL(n, \C) \,$ acts
freely on $\, \tilde{\CC}_n \,$, and $ \CC_n $ is a smooth affine
variety of dimension $ 2n $ (see \cite{W}).

\subsection{Deformed preprojective algebras}
\la{DPA} Let $ Q = (I, \, Q) $ be a finite quiver with vertex set
$ I $ and arrow set $ Q $, and let $\, \bar{Q}\,$ be its double
(i.e. the quiver obtained from $ Q $ by adding a reverse arrow $
a^* $ to each arrow $ a \in Q $). Following \cite{CBH}, for each
$\,\lambda = (\lambda_i) \in \C^{I}\,$, we define the {\it
deformed preprojective algebra of weight $ \lambda $} by
$$
\Pi_{\lambda}(Q) := \C\bar{Q}\left/\left\langle\sum_{a \in Q}\,
[a,\,a^*] - \sum_{i \in I}\, \lambda_i e_i \right\rangle\right.\ .
$$
Here $ \C\bar{Q} $ denotes the path algebra of the double quiver $
\bar{Q} $ and $\, e_i \in \C\bar{Q} \,$ stand for the orthogonal
idempotents corresponding to the trivial paths (vertices) in $
\bar{Q} $.

In this section we will be concerned with the following example. 
Let $ Q_\infty $ be the
quiver consisting of two vertices $\, \{\infty, \, 0\} \,$ and
two arrows $ v:\, 0 \to \infty $ and $ X:\, 0 \to 0 $. Write
$\, w := v^* \,$ and $\,
Y := X^* \,$ for the reverse arrows in $ \bar{Q}_\infty $. The
algebra $ \Pi_\lambda := \Pi_{\lambda}(Q_\infty) $ is then
generated by $\, X, Y, v, w\,$ and the idempotents $ e_0 $ and $
e_\infty $, which, apart from the standard path algebra relations,
satisfy
\begin{equation}
\la{E25} [X,\,Y] - wv = \lambda_0 e_0 \ ,\quad vw = \lambda_\infty
e_\infty \ .
\end{equation}
Thus, right $ \Pi_{\lambda}$-modules can be identified with
representations $ V = V_\infty \oplus V_0 $ of $ \bar{Q}_\infty $,
in which linear maps $ \,\X, \Y \in \End(V_0), \,\v \in
\Hom(V_\infty, V_0),\, \w \in \Hom(V_0, V_\infty) \,$,
corresponding to the (right) action of $\, X, Y, v, w \,$, satisfy
the equations
\begin{equation}
\la{E26} [\X,\Y] + \v\w = - \lambda_0 \, \id_{V_0} \ , \quad \w\v
= \lambda_\infty \, \id_{V_\infty} \ .
\end{equation}

\subsection{Representation varieties}
\la{RV} Set $ \lambda_0 = 1 $ and $ \lambda_\infty = -n $ in the
above example. Comparing \eqref{E26} to the definition of the
Calogero-Moser spaces in Section~\ref{CMS}, we see that each point
of $ \tilde{\CC}_n $ corresponds naturally to a right $
\Pi_{\lambda} $-module of dimension vector $ \alpha = (1,n) $. All
such modules are simple and, as it was originally observed by
Crawley-Boevey (see \cite{CB2} or Proposition~\ref{exist} below),
every semisimple module of $ \Pi_{\lambda} $ of dimension $ \alpha $
has this form. Thus, we can identify the
varieties $ \CC_n $ with representation spaces $\,
\Rep(\Pi_{\lambda}^{\rm \footnotesize opp},\, \alpha)/\!/\G(\alpha)\,$,
parametrizing the isomorphism classes of simple modules of
dimension vector $ \alpha $.

On the other other hand, according to \cite{BW, BW1}, the
Calogero-Moser varieties $ \CC_n $ also parametrize the
isomorphism classes of right ideals of the first Weyl algebra $
A_1(\C) $. Our aim is to relate simple modules of $ \Pi_{\lambda}
$ to ideals of $ A_1 $ in a natural (functorial) way. To this end
we will use a version of ``recollement'' formalism, due to
Beilinson, Bernstein and Deligne \cite{BBD} (see also \cite{CPS}).

\subsection{Recollement}
\la{R} Originating from geometry, this formalism abstracts
functorial relations between the categories of abelian sheaves on
a topological space $ X $, its open subspace $U$ and the closed
complement of $ U $ in $ X $. Unlike \cite{BBD}, we will work with
abelian (rather than triangulated) categories. For reader's
convenience, we review the definition of recollement in this
special case.

Let $ \AB $, $ \AB' $ and $ \AB'' $ be three abelian categories.
We say that $ \AB $ is a {\it recollement} of  $ \AB' $ and $
\AB'' $ if there are six additive functors
\begin{equation}
\la{D1}
\begin{diagram}[small, tight]
\AB' 
& \ \pile{\lTo^{i^*}\\ \rTo^{i_*} \\ \lTo^{i^!}} \ & 
\AB
& \ \pile{\lTo^{j_!}\\ \rTo^{j^*}\\ \lTo^{j_*}} \ & 
\AB'' 
\end{diagram}
\end{equation}
satisfying the following conditions:

(R1)  \ $ i^* $ and $ i^{!} $ are adjoint to $ i_* $ on the left
and on the right respectively, the adjunction morphisms $\,
\id_{\AB'} \to i^{!} i_* \,$ and $\, i^*i_* \to \id_{\AB'} \,$
being isomorphisms;

(R2) \ $ j_{!} $ and $ j_* $ are adjoint to $ j^* $ on the left
and on the right respectively, the adjunction morphisms $\,
\id_{\AB''} \to j^*j_! \,$ and $\, j^*j_* \to \id_{\AB''} \,$
being isomorphisms;

(R3) \  $ i_* $ is an embedding of $ \AB' $ onto the full
subcategory of $ \AB $ consisting of objects annihilated by $ j^*
$; thus $ j^* $ induces an equivalence of abelian categories $\,
\AB/\AB' \simeq \AB'' \,$.

These conditions imply

(R4)  \  $\, i^* j_! = 0\,$ and $\, i^! j_* = 0 \,$ (by adjunction
with $\, j^* i_* = 0 \,$);

(R5)  \ the canonical morphisms $\,\sigma: j_! j^* \to \id_{\AB}
\,$ and $\,\eta: \id_{\AB} \to j_*j^* \,$ give rise to the exact
sequences of natural transformations
\begin{equation}
\la{seqs} j_{!} j^* \to  \id_{\AB} \to i_* i^* \to 0\quad , \quad
0 \to i_{*} i^{!} \to \id_{\AB} \to j_* j^* \ .
\end{equation}

We will need some simple consequences of the above definition,
which we record in the next two lemmas. 
\begin{lemma}
\la{Lexact} Let $ L $ be an object of $ \AB $ such that $\, i^* L
= i^! L = 0 \,$. Then

$(a)\ \sigma_L:  j_{!} j^* L \to L \,$ is an epimorphism in 
$ \AB $ with $\, \Ker(\sigma_L) \cong i_* i^{!} j_! j^* L  \,$.

$(b)\,$ If $ \AB $ has enough projectives, then we also have
$\,\Ker(\sigma_L) \cong i_* (\LL_1 i^*) L \,$, where $\, \LL_1
i^*:\, \AB \to \AB' \,$ is the left derived functor of $ i^* $.
\end{lemma}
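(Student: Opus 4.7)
The plan is to derive both parts from the short exact sequence
\[
0 \to K \to j_!j^*L \xrightarrow{\sigma_L} L \to 0, \qquad K := \Ker \sigma_L,
\]
which is the engine of the argument. The existence of this sequence already uses the hypothesis $i^*L = 0$: the first sequence in (R5) reads $j_!j^*L \to L \to i_*i^*L \to 0$, and the last term vanishes by hypothesis, so $\sigma_L$ is an epimorphism.

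For part (a), I would next apply the exact functor $j^*$ (exact because it has both adjoints $j_!$ and $j_*$). Since $j^*\sigma_L$ is the counit isomorphism of (R2), the resulting exact sequence $0 \to j^*K \to j^*L \xrightarrow{\sim} j^*L \to 0$ forces $j^*K = 0$. By (R3), $K$ then lies in the essential image of $i_*$; combined with the unit/counit isomorphisms $i^*i_* \cong \id \cong i^!i_*$ from (R1), this gives $K \cong i_*i^*K \cong i_*i^!K$. To identify $i^!K$ intrinsically, apply the left exact functor $i^!$ to the short exact sequence and use $i^!L = 0$: the result is $i^!K \cong i^!j_!j^*L$, whence $K \cong i_*i^!j_!j^*L$.

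For part (b), I would apply the right exact functor $i^*$ to the same short exact sequence and pass to the long exact sequence of left derived functors. Using $i^*j_! = 0$ (R4) and $i^*L = 0$, this collapses to
\[
\LL_1 i^*(j_!j^*L) \longrightarrow \LL_1 i^*L \longrightarrow i^*K \longrightarrow 0.
\]
If the leftmost term vanishes, then $\LL_1 i^*L \cong i^*K$, and combined with $K \cong i_*i^*K$ from part~(a) this yields $K \cong i_*\LL_1 i^*L$, as required.

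The main obstacle is therefore the vanishing $\LL_n i^*(j_!M) = 0$ for $M = j^*L$ and $n \geq 1$. The natural route is via Grothendieck's spectral sequence for the composition $i^* \circ j_! = 0$ (cf.\ (R4)): since $j_!$ is left adjoint to the exact functor $j^*$, it preserves projective objects, so it carries projective resolutions to complexes of projectives in $\AB$, which are $i^*$-acyclic. The hypothesis that $\AB$ has enough projectives is used both to compute $\LL_1 i^*$ from a projective resolution in $\AB$ and to set up the spectral sequence, whose collapse forces $\LL_n i^*(j_!M) = 0$.
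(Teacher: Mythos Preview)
Your proposal is correct and follows essentially the same route as the paper: for (a) you use the first sequence in (R5) to get surjectivity, apply $j^*$ to see the kernel lies in the image of $i_*$, then apply $i^!$ to identify it; for (b) you invoke the Grothendieck spectral sequence for $i^* \circ j_! = 0$ (using that $j_!$ preserves projectives) to obtain $(\LL_1 i^*)j_! = 0$, and then read off the result from the long exact sequence of $i^*$. The only cosmetic point is that you state the vanishing $\LL_n i^*(j_! M)=0$ for all $n\ge 1$, whereas only $n=1$ is needed and is what the edge-map argument $E^2_{1,0}\cong E^\infty_{1,0}\hookrightarrow H_1=0$ actually delivers.
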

\begin{proof}
$(a)$ $\,\sigma_L $ being epimorphism follows immediately from the
first exact sequence of \eqref{seqs}. To compute its kernel we
apply $ j^* $ to the exact sequence $\, 0 \to
\Ker(\sigma_L) \to j_{!} j^* L \to L \to 0 \,$. By (R2), $\, j^*
\,$ maps $ \sigma_L $ to an isomorphism in $ \AB'' $; hence $\,
j^*(\Ker \, \sigma_L) = 0 \,$ and therefore $\, \Ker(\sigma_L) =
i_* K \,$ for some $ K \in {\rm Ob}(\AB') $. Now, to see that $ K
\cong i^{!} j_! j^* L  $ we apply the (left exact) functor $\, i^!
\,$ to $\, 0 \to i_* K \to j_{!} j^* L \to L \to 0 \,$
and use the first isomorphism of (R1).

$(b)$ If the category $ \AB $ has enough projectives, so does its
quotient $ \AB''$. Moreover, being left adjoint to an exact functor,
 $\, j_!: \AB'' \to \AB \,$  maps projectives to 
projectives. Under these conditions, there is a Grothendieck spectral 
sequence with $\, E^2_{p,q} = (\LL_p i^*) (\LL_q j_!) \,$ converging 
to $\, H_n = \LL_{n}(i^* j_!) \,$. In view of (R4), the limit 
terms of this spectral sequence are zero, and hence so is its 
edge map $\,  E^2_{1,0} \cong  E^\infty_{1,0} \into H_1 \,$. This yields
$\, (\LL_1 i^*)j_! = 0 \,$. Now, applying $\, i^* $ 
to the exact sequence $\, 0 \to i_* K \to j_{!} j^* L \to L \to 0 \,$ 
and using again (R4), we conclude
$\, \Ker(\sigma_L) = i_* K \cong  i_* (\LL_1 i^*) L \, $.
\end{proof}

Next, in addition to the six basic functors \eqref{D1}, we introduce
another additive functor $\, j_{! *}:\, \AB'' \to \AB \,$ (cf. \cite{BBD},
Section~1.4.6). By axiom (R2), for any $\, X,\, Y \in {\rm
Ob}(\AB'')\,$, there are natural isomorphisms
\begin{equation}
\la{norm} \Hom_{\AB}(j_{!} X,\, j_{*}Y) \cong \Hom_{\AB''}(X,\,
j^*j_{*}Y) \cong \Hom_{\AB''}(X,\, Y)\ .
\end{equation}
Letting $ X = Y $ in \eqref{norm}, we get a family of morphisms
$\, \N_X:\, j_! X \to j_* X \,$ in $ \AB $, corresponding to the
identity maps of objects in $ \AB'' $. This defines a
natural transformation $\, \N:\, j_! \to j_* \,$ (see \cite{BBD},
1.4.6.2), satisfying
\begin{equation}
\la{nid} \N\, j^* = \eta \circ \sigma\ .
\end{equation}
The functor $ j_{! *}: \AB'' \to \
AB $ is now defined by\footnote{This functor 
is referred to as the `prolongement interm\'ediare' in \cite{BBD}.}
\begin{equation}
\la{nfun} 
j_{! *}X := \im(\N_X) \subseteq j_* X \ , \quad  X \in {\rm Ob}(\AB'') \ .
\end {equation}
\begin{lemma}
\la{Linj}

\hfill

$(a)$ $ j_{! *} $ gives an equivalence between $ \AB''
$ and the full subcategory of $ \AB $ consisting of objects $ L $
such that $ i^* L = i^! L = 0 $. 
The inverse of $ j_{! *} $ is given by $\, j^* $.

$(b)$ $\ j^* $ and $ j_{! *}$ induce the mutually inverse bijections
$$
\begin{diagram}[small, tight]
\left\{  
\begin{array}{c}
\mbox{\rm isomorphism classes of} \\ 
\mbox{\rm simple objects} \ L \in \AB \ \mbox{\rm with} \ j^* L \not= 0 
\end{array}
\right\} 
& \ \pile{ \rTo^{j^*}\\ \lTo^{j_{! *}} } \ &
\left\{
\begin{array}{c}
\mbox{\rm isomorphism classes of }\\ 
\mbox{\rm nonzero simple objects of}\ \AB'' 
\end{array}
\right\}
\end{diagram}
$$
\end{lemma}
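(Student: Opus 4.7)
The plan is to prove (a) by exhibiting $j^*$ and $j_{!*}$ as mutually quasi-inverse equivalences between $\AB''$ and the full subcategory $\CC := \{L \in \AB \,:\, i^* L = i^! L = 0\}$, and then to deduce (b) by verifying that this equivalence preserves and reflects simplicity. The main tool throughout is that $j^*$, having both adjoints by (R2), is exact, and that (R2) also makes $j^*\sigma$ and $j^*\eta$ into isomorphisms.

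For (a), the first step is to show $j_{!*} X \in \CC$ for every $X \in \AB''$. Factor $\N_X$ as $j_! X \twoheadrightarrow j_{!*} X \hookrightarrow j_* X$. Applying the right-exact functor $i^*$ to the surjection and using $i^* j_! = 0$ from (R4) gives $i^* j_{!*} X = 0$; applying the left-exact functor $i^!$ to the injection and using $i^! j_* = 0$ from (R4) gives $i^! j_{!*} X = 0$. Applying the exact functor $j^*$ to the same factorization and using \eqref{nid} yields $j^* j_{!*} X \cong X$ naturally. Conversely, for $L \in \CC$, Lemma~\ref{Lexact}(a) shows $\sigma_L$ is an epimorphism (since $i^* L = 0$), and the second sequence in \eqref{seqs} shows $\eta_L$ is a monomorphism (since its kernel $i_* i^! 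L$ vanishes). Because $\N_{j^* L} = \eta_L \circ \sigma_L$ by \eqref{nid} and $\sigma_L$ is epi, this gives $j_{!*} j^* L = \im(\N_{j^* L}) = \im(\eta_L) = L$, completing (a).

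For (b), the next step is to check that every simple $L \in \AB$ with $j^* L \neq 0$ lies in $\CC$: if $i_* i^* L \neq 0$, simplicity promotes the canonical surjection $L \twoheadrightarrow i_* i^* L$ from the first sequence of \eqref{seqs} to an isomorphism, forcing $j^* L = 0$ via (R3), a contradiction; symmetrically the monomorphism $i_* i^! L \hookrightarrow L$ from the second sequence must vanish. Part (a) then already produces a bijection of isomorphism classes, and it only remains to match simple objects. For $X$ simple in $\AB''$ and $0 \neq M \subseteq j_{!*} X$, exactness of $j^*$ forces $j^* M \in \{0, X\}$: the case $j^* M = 0$ yields $M \cong i_* K$ by (R3), and $\Hom_{\AB}(i_* K, j_{!*} X) \cong \Hom_{\AB'}(K, i^! j_{!*} X) = 0$ kills $M$; the case $j^* M = X$ makes $N := j_{!*} X / M$ satisfy $j^* N = 0$, so $N \cong i_* K'$ and $\Hom_{\AB}(j_{!*} X, i_* K') \cong \Hom_{\AB'}(i^* j_{!*} X, K') = 0$ kills $N$. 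Finally, for simple $L$ with $j^* L \neq 0$, if $0 \neq Y \subsetneq j^* L$, then the subobject $M := L \cap j_* Y \subseteq L$ (formed inside $j_* j^* L$ via the monomorphism $\eta_L$) satisfies $j^* M = Y$ by exactness of $j^*$, contradicting the simplicity of $L$. I anticipate that the only real bookkeeping difficulty is consistently tracking the left/right exactness of $i^*, i^!, j_!, j_*$ in each appeal to (R1), (R4), and \eqref{seqs}; once this is set up correctly, both parts follow formally from the defining properties of the recollement.
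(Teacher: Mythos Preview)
Your proof is correct and follows essentially the same route as the paper's: for (a) you use \eqref{nid} together with the exact sequences \eqref{seqs} and the vanishing from (R4) exactly as the paper does, and for (b) you make the same observation that simple $L$ with $j^*L \neq 0$ are precisely those in $\CC$. The paper's proof of (b) simply asserts that it ``follows immediately from (a)'' given this observation, whereas you spell out explicitly why $j_{!*}$ and $j^*$ preserve simplicity; this extra detail is a welcome elaboration but not a genuinely different approach.
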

\begin{proof}
$(a)$ It follows from (R2) that $\, j^*j_{! *} \simeq \id_{\AB''} $,
so $ j_{! *} $ is a fully faithful functor.
On the other hand, in view of \eqref{nid}, we have $\, j_{! *} j^* L =
\im(\eta_L \circ \sigma_L)\,$ for any $ L \in \mbox{Ob}(\AB) $. 
If $\, i^* L = i^! L = 0 \,$, the map $ \sigma_L $ is epi and $ \eta_{L}
$ is mono, see  \eqref{seqs}, so $\, \im(\eta_L \circ \sigma_L) \cong L \,$. 
Conversely, (R4) implies $\, i^* j_{! *} = i^! j_{! *} = 0\,$; 
hence, if $\, j_{! *} j^* L \cong L \,$ for some $ L \in
\mbox{Ob}(\AB) $, then $\, i^* L = i^! L = 0 \,$. The (essential)
image of $ j_{! *} $ consists thus of objects 
$ L \in \AB $ with $\, i^* L = i^! L = 0 \,$.

$ (b)\,$ follows immediately from $(a)$ if we observe that the (nonzero)
simple objects $ L \in \AB $ with $\, i^* L = i^! L = 0 \,$
are exactly the ones, for which $ j^*L \not=0$.
\end{proof}

\subsection{Main theorem}
\la{MTh} From now on, we fix $ n \geq 0 $ and let $ \lambda =
(-n,1) $ as in Section~\ref{RV}. The following observation is then
immediate from the definition of $ \Pi_\lambda $.
\begin{lemma}
\la{L} $ A_1 $ is isomorphic to the quotient of $\, \Pi_\lambda \,
$ by the ideal generated by $\, e_\infty \,$.
\end{lemma}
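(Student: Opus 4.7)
The plan is to write explicit generators and relations for both sides of the claimed isomorphism and match them directly; the lemma is essentially a bookkeeping statement about what survives when $e_\infty$ is killed.

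First, I would note that in the double quiver $\bar Q_\infty$ we have $v = e_\infty v e_0$ and $w = e_0 w e_\infty$, so both $v$ and $w$ already lie in the two-sided ideal $(e_\infty)$ of $\Pi_\lambda$. Moreover, the unit $1 = e_0 + e_\infty$ of $\Pi_\lambda$ becomes $e_0$ modulo $(e_\infty)$. Hence $\Pi_\lambda/(e_\infty)$ is generated by the images of $X$ and $Y$ alone, and the two defining relations \eqref{E25} collapse, using $\lambda_0 = 1$, to $[X,Y] = 1$; the second relation $vw = \lambda_\infty e_\infty$ becomes the trivial identity $0 = 0$. This produces a canonical surjection $\Pi_\lambda/(e_\infty) \twoheadrightarrow A_1(\C)$.

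For injectivity I would construct an explicit inverse. Define a homomorphism $\psi \colon \C\bar Q_\infty \to A_1(\C)$ on generators by $e_0 \mapsto 1$, $e_\infty \mapsto 0$, $X \mapsto x$, $Y \mapsto y$, $v \mapsto 0$, $w \mapsto 0$, where $x,y$ are the standard Weyl generators. A direct check shows that both relations of \eqref{E25} are satisfied in $A_1$ under this assignment (the first becomes $[x,y] = 1$, the second $0 = 0$), so $\psi$ descends to $\Pi_\lambda$; since $e_\infty \in \ker\psi$, it further factors through $\Pi_\lambda/(e_\infty)$, yielding a two-sided inverse to the surjection above.

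The computation is entirely routine, so there is no genuine obstacle; the only point requiring care is the path-algebra convention (reading paths right-to-left, $e_j a e_i = a$ for an arrow $a\colon i \to j$), which is what ensures that $v$ and $w$ are actually annihilated in the quotient rather than surviving as nonzero elements.
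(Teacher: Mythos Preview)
Your argument works, but the arrow in the first paragraph points the wrong way: showing that $\Pi_\lambda/(e_\infty)$ is generated by the images of $X,Y$ satisfying $[X,Y]=1$ yields a surjection $A_1 \twoheadrightarrow \Pi_\lambda/(e_\infty)$, not the reverse. With that corrected, your $\psi$ in the second paragraph is exactly the candidate inverse, and composing in either order gives the identity on generators, so the proof is complete.

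The paper handles the injectivity step differently. It constructs the same surjection $A_1 \to \Pi_\lambda/\langle e_\infty\rangle$ (via $x\mapsto X$, $y\mapsto Y$) but then, rather than building an explicit inverse, simply observes that $A_1$ is a simple ring, so any nonzero homomorphism out of it is automatically injective. This is a one-line shortcut that your approach replaces with a direct construction. Your route is more elementary in that it does not invoke simplicity of $A_1$, and it has the incidental benefit of making it manifest that $\Pi_\lambda/\langle e_\infty\rangle$ is nonzero---a point the paper's simplicity argument needs but leaves implicit.
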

In fact, combined with the canonical projection $\, \Pi_\lambda
\onto \Pi_\lambda /\langle e_\infty \rangle \,$, the algebra map
$\,\C\langle x,y\rangle \to \Pi_\lambda \,$, $\, x \mapsto X \,$,
$\, y \mapsto Y $, is an epimorphism with kernel containing $\
xy-yx-1 \,$. The induced map $\, A_1 := \C\langle x,y\rangle/
\langle xy-yx-1 \rangle \to \Pi_\lambda/\langle e_\infty
\rangle\,$ is then an isomorphism of algebras, since $ A_1 $ is
simple.

Simplifying the notation, we write $\, \Pi = \Pi_{\lambda}\, , \,
A = A_1 $, and let $\, B := e_\infty \Pi \,e_\infty \,$. Clearly,
$ B $ is an associative subalgebra of $ \Pi $ with identity
element $ e_{\infty} $; by analogy with representation theory of
semisimple complex Lie algebras (see Section~\ref{analog} below),
we call it the {\it parabolic subalgebra} of $ \Pi $.

Now, we set $\, \AB := \Mod(\Pi) \,$, $\, \AB' := \Mod(A) \,$ and
$\, \AB'' := \Mod(B) \,$ and define the six functors \eqref{D1}
between these categories as follows. Using the isomorphism of
Lemma~\ref{L}, we first identify $\, A = \Pi/\langle e_\infty
\rangle \,$, and let $\, i_*\,$ be the restriction functor for the
canonical epimorphism $\, i: \Pi \to A \,$. This functor is fully
faithful and has both the right adjoint $\, i^! := \Hom_{\Pi}(A,\,
\mbox{---}\,)\,$ and the left adjoint $\, i^* := \,\mbox{---}
\otimes_{\Pi} A \,$ with adjunction maps satisfying $\, i^* i_*
\simeq \id_{\AB'} \simeq i^!\,i_* \,$. The functor $\, j^*: \AB
\to \AB'' \,$ is then defined by $\, j^* := \mbox{---}
\otimes_{\Pi} \Pi e_\infty \,$. As $ e_\infty \in \Pi $ is an
idempotent, $ j^* $ is exact and has also the right adjoint $\,
j_* = \Hom_{B}(\Pi e_\infty, \,\mbox{---}\,) \,$ and the left
adjoint $\, j_! = \mbox{---} \otimes_{B} e_\infty \Pi \,$
satisfying $\, j^*j_* \simeq \id_{\AB''} \simeq j^* j_! \,$. Now,
if $\, j^* M = 0 \,$ in $ \AB'' $, we have $\, M \otimes_{\Pi} \Pi
e_\infty = M \, e_\infty = 0 \,$, so $\, \langle e_\infty \rangle
\subseteq \Ann_{\Pi}(M)\,$, and therefore $\, M \cong i_* N \,$
for some $ N \in \mbox{Ob}(\AB')$. Summing up, we have the
following
\begin{proposition}
\la{P1} The functors $ (i^*, \,i_*,\, i^{!}) $ and $ (j_!,
\,j^*,\, j_*) $ defined above satisfy the recollement conditions
of Section~\ref{R}.
\end{proposition}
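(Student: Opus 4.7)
The plan is to recognize Proposition~\ref{P1} as an instance of the standard ``idempotent recollement'': for any associative ring $R$ equipped with an idempotent $e\in R$, the three categories $\Mod(R/\langle e\rangle)$, $\Mod(R)$ and $\Mod(eRe)$ sit in a recollement via the six natural tensor/Hom functors (cf.\ \cite{CPS}). Here I would specialize to $R = \Pi$, $e = e_\infty$, and (via Lemma~\ref{L}) $R/\langle e\rangle = A$. Once this identification is in place, each of (R1)--(R3) reduces to a short, essentially formal check; I would verify them in the order (R3), (R1), (R2).

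Axiom (R3) is the quickest. Since $A = \Pi/\langle e_\infty\rangle$, the restriction functor $i_*$ is a fully faithful embedding whose essential image is precisely the full subcategory of right $\Pi$-modules $M$ with $M\langle e_\infty\rangle = 0$. On the other hand, $j^*M = M\otimes_\Pi \Pi e_\infty = Me_\infty$, which vanishes iff $M\langle e_\infty\rangle = 0$, as already noted just before the statement of the proposition. Thus $i_*(\AB') = \Ker(j^*)$ inside $\AB$, and the induced equivalence $\AB/\AB'\simeq \AB''$ then follows from standard Serre-quotient nonsense, because $j^*$ is exact and admits a fully faithful right adjoint.

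For (R1), the adjunctions $i^*\dashv i_*\dashv i^!$ are the usual extension/coextension of scalars along the ring epimorphism $\Pi\onto A$, and the two isomorphisms $i^*i_*\simeq \id_{\AB'}\simeq i^!i_*$ reduce, for any right $A$-module $N$, to the trivial identities $N\otimes_\Pi A = N/N\langle e_\infty\rangle = N$ and $\Hom_\Pi(A,N) = \{n\in N : n\langle e_\infty\rangle=0\} = N$. For (R2), I would exploit the idempotent nature of $e_\infty$: the canonical identification $B = e_\infty\Pi e_\infty\cong \End_\Pi(\Pi e_\infty)$, combined with tensor--Hom adjunction for the $(\Pi,B)$-bimodule $\Pi e_\infty$ and the $(B,\Pi)$-bimodule $e_\infty\Pi$, furnishes both $j_!\dashv j^*$ and $j^*\dashv j_*$. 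The isomorphism $j^*j_!\simeq \id_{\AB''}$ reduces to the trivial identity $e_\infty\Pi\otimes_\Pi\Pi e_\infty = e_\infty\Pi e_\infty = B$, while $j^*j_*\simeq \id_{\AB''}$ is realized by ``evaluation at $e_\infty$'', with inverse sending $n\in N$ to the $B$-linear map $p\mapsto n\cdot(e_\infty p)$.

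I do not expect any genuine obstacle: the whole proof is routine idempotent-recollement bookkeeping. The only real point of care is keeping straight the left/right bimodule conventions, so that every tensor product and Hom set carries the intended module structure.
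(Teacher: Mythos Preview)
Your proposal is correct and matches the paper's approach: the paper does not give a separate proof of Proposition~\ref{P1} but instead treats the paragraph immediately preceding it as the argument, verifying (R1), (R2) and (R3) via exactly the same extension/coextension-of-scalars and idempotent-functor identities you describe. Your write-up is simply a slightly more explicit and reordered version of that same standard ``idempotent recollement'' verification.
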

\noindent Our main result can now be stated as follows.
\begin{theorem}
\la{T1} For each $ n \geq 0 $, the composition of functors
\begin{equation}
\la{EF} \Omega:\ \Mod(\Pi) \stackrel{j^*}{\to} \Mod(B)
\stackrel{j_!}{\longrightarrow} \Mod(\Pi) \stackrel{i^!}{\to}
\Mod(A)
\end{equation}
maps {\rm injectively} the set $ \Irr(\Pi, \alpha) $ of
isomorphism classes of simple $ \Pi$-modules of dimension $ \alpha
= (1,n) $ into the set $ \R $ of isomorphism classes of right
ideals of $ A $. If we identify $\, \CC_n = \Rep(\Pi^{\rm \footnotesize opp},
\alpha)/\!/\G(\alpha) \,$ as in Section~\ref{RV}, then the map
induced by $\, \Omega \,$ agrees with the Calogero-Moser map
$\,\omega: \, \CC_n \to \R \,$ constructed in \cite{BW}.
Collecting such maps for all $ n \geq 0 $, we get thus the
bijective correspondence $ \bigsqcup_{n \geq 0} \CC_n
\stackrel{\sim}{\to} \R  \,$.
\end{theorem}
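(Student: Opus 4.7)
The plan is to run the recollement of Proposition~\ref{P1} on the simple $\Pi$-modules in question, extract an exact sequence that realizes $\Omega(L)$ as an explicit right $A$-module, and then identify this $A$-module with the right ideal attached to the same point of $\CC_n$ by the Berest--Wilson construction of \cite{BW}. Once the two maps are shown to coincide, both the injectivity of $\Omega$ on $\Irr(\Pi,\alpha)$ and the global bijection $\bigsqcup_{n\geq 0}\CC_n \iso \R$ will follow at once from the main theorem of \cite{BW, BW1}.

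The first half of this plan should be a formal consequence of the lemmas already established. Given a simple right $\Pi$-module $L$ of dimension vector $\alpha = (1,n)$, both $i^*L = L/L\langle e_\infty\rangle$ and $i^!L = \{m\in L : m\langle e_\infty\rangle = 0\}$ sit inside the simple module $L$ as a quotient and a subobject respectively, so each equals $0$ or $L$; the hypothesis $Le_\infty = V_\infty \cong \C \neq 0$ excludes the latter possibility in both cases. Thus Lemma~\ref{Linj} applies to give $L \cong j_{!*}j^*L$, showing in particular that $j^*:\Irr(\Pi,\alpha)\hookrightarrow\Irr(B)$ is already injective on isomorphism classes. Moreover Lemma~\ref{Lexact}(a) furnishes a natural short exact sequence
\begin{equation*}
0 \to i_*\Omega(L) \to j_!j^*L \xrightarrow{\sigma_L} L \to 0,
\end{equation*}
realizing $\Omega(L) = i^!j_!j^*L$ as the kernel of the multiplication map $Le_\infty\otimes_B e_\infty\Pi \to L$, taken in the category of right $A$-modules via the isomorphism of Lemma~\ref{L}.

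The remaining step is to identify this kernel with the rank-one projective right $A$-module assigned to $L$ by the map $\omega$ of \cite{BW}. Fixing a representative $(\X,\Y,\v,\w) \in \tilde{\CC}_n$ of the class of $L$ together with a basis vector of $V_\infty = \C$ trivializes $j^*L = Le_\infty$ as a right $B$-module, so that $Le_\infty\otimes_B e_\infty\Pi$ becomes an explicit finitely generated quotient of $e_\infty\Pi$ encoded by the data $(\X,\Y,\v,\w)$; projecting onto the $e_0$-component and invoking Lemma~\ref{L} embeds the kernel as a right ideal of $A$. The main technical obstacle is then to massage this ideal, using only the defining relations \eqref{E25} and the particular values $\lambda_0 = 1$, $\lambda_\infty = -n$, into the form of the Berest--Wilson ansatz; what is really needed is a sufficiently workable $\GL(n,\C)$-equivariant description of $e_\infty\Pi$ as a $(B,\Pi)$-bimodule so that the identification descends to the quotient $\CC_n = \tilde{\CC}_n/\!/\GL(n,\C)$ and coincides termwise with $\omega$.
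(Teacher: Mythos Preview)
Your first half is correct and matches the paper almost verbatim: showing $i^*L=i^!L=0$ for finite-dimensional $L$, invoking Lemma~\ref{Lexact}(a) to realize $\Omega(L)$ as $\Ker(\sigma_L)$, and noting via Lemma~\ref{Linj} that $j^*$ separates simples. Your strategy for injectivity---deduce it from the identification with $\omega$ and the known injectivity of $\omega$ from \cite{BW,BW1}---is cleaner than what the paper does (it re-proves injectivity independently, going through \cite{BC}, Theorem~3, to recover the functional $\varepsilon$ from $\Omega(L)$ and then applying Lemma~\ref{Linj}(b)); both routes are fine.

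The gap is in the second half, which you yourself flag as incomplete. You are missing the concrete mechanism the paper uses to ``massage the ideal into the Berest--Wilson ansatz.'' The key tool is Lemma~\ref{L22}: for $n\neq 0$ the algebra $\Pi$ is Morita equivalent to $e_0\Pi e_0$, and the latter is a quotient of the free algebra $R=\C\langle x,y\rangle$. Under this equivalence the cyclic module $M=j_!j^*L$ becomes $Me_0$, which one identifies explicitly with $R/J$ where $J$ is the right ideal generated by $\{\,a\,([x,y]-1)-\varepsilon(a)\,:\,a\in R\,\}$ and $\varepsilon:R\to\C$ is the ``weight'' functional $\varepsilon(a)=\w\,a^{\tau}(\X,\Y)\,\v$ determined by the Calogero--Moser data. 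The map $\sigma_L$ then becomes the $R$-module map $R/J\to\C^n$, $[1]_J\mapsto\v$, and its kernel is precisely the ideal class produced by the explicit construction of \cite{BC}, Theorem~5 (which is the same map as $\omega$ of \cite{BW}). Without Lemma~\ref{L22} and the passage through $R$, your sketch has no handle on $e_\infty\Pi$ as a $(B,\Pi)$-bimodule, and the comparison with \cite{BW} remains a wish rather than an argument.
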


There are several different constructions of the map $ \omega $ in
the literature, the most explicit one being given in \cite{BC}. To
prove the theorem, we will compute $ \Omega $ on simple
$\Pi$-modules and show that the induced map agrees with \cite{BC}.
For this we will need another observation regarding the structure
of $ \Pi_\lambda $.
\begin{lemma}
\la{L22} If $ \lambda = (-n,1) $ with $ n \not= 0 $, the algebra $
\Pi_{\lambda} $ is Morita equivalent to $ e_0 \Pi_{\lambda} e_0 $,
while $ e_0 \Pi_{\lambda} e_0 $ is isomorphic to a quotient of the
free algebra $ R = \C \langle x, y \rangle $.
\end{lemma}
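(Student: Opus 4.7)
The proof splits into two independent assertions, which I would treat separately. For the Morita equivalence, the standard criterion is that $\Pi_{\lambda}$ and $e_0 \Pi_{\lambda} e_0$ are Morita equivalent via the bimodule $\Pi_{\lambda} e_0$ precisely when $e_0$ is a \emph{full} idempotent, that is, $\Pi_{\lambda}\, e_0 \,\Pi_{\lambda} = \Pi_{\lambda}$. Since $1 = e_0 + e_\infty$, the task reduces to exhibiting $e_\infty$ as an element of $\Pi_{\lambda}\, e_0 \,\Pi_{\lambda}$, and this is where the hypothesis $n \ne 0$ enters essentially. The defining relation $vw = \lambda_\infty e_\infty = -n e_\infty$, combined with the path-algebra identities $v = v e_0$ and $w = e_0 w$, yields
$$
e_\infty \;=\; -\tfrac{1}{n}\, v w \;=\; -\tfrac{1}{n}\, v\cdot e_0\cdot w \;\in\; \Pi_{\lambda}\, e_0\, \Pi_{\lambda},
$$
which settles the first claim.

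For the second assertion, I would consider the algebra map
$$\varphi:\ \C\langle x, y\rangle \to e_0 \Pi_{\lambda} e_0,\qquad x \mapsto X,\quad y \mapsto Y,$$
and argue that $\varphi$ is surjective. As a vector space, $e_0 \Pi_{\lambda} e_0$ is spanned by (images of) paths in $\bar{Q}_\infty$ whose source and target are both $0$. The vertex $\infty$ has only the two incident arrows $v$ and $w$ and carries no loops, so the only way a path at $0$ can leave and return to $0$ is via the combination $wv$. Hence every path at $0$ is a product of words in $X, Y$ separated by copies of $wv$. The preprojective relation now reads $[X,Y] - wv = \lambda_0 e_0 = e_0$, giving
$$ wv \;=\; [X,Y] - e_0 \;=\; XY - YX - e_0, $$
which manifestly lies in the image of $\varphi$. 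A straightforward induction on the number of $wv$-subpaths eliminates every occurrence of $w$ and $v$, expressing each path at $0$ as a polynomial in $X$ and $Y$. Thus $\varphi$ is surjective and $e_0 \Pi_{\lambda} e_0 \cong \C\langle x,y\rangle / \ker\varphi$.

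Neither step presents a deep difficulty; the only mildly delicate point is the path-combinatorial argument in the second part, but it reduces at once to the observation that $\infty$ is a valence-two vertex with incident arrows only $v$ and $w$. Note that the lemma does not assert any explicit description of $\ker\varphi$ --- only that $e_0 \Pi_{\lambda} e_0$ is \emph{some} quotient of the free algebra --- and this is all that the argument above yields, and all that is needed for the subsequent applications.
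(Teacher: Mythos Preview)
Your proof is correct and follows essentially the same route as the paper: the same full-idempotent criterion with the same computation $e_\infty = -\tfrac{1}{n}v e_0 w$, and the same surjection $R \to e_0\Pi_\lambda e_0$, $x\mapsto X$, $y\mapsto Y$, which the paper also reduces to the relation $wv = [X,Y]-e_0$ (your path-combinatorial justification merely spells out what the paper calls ``easily seen''). The only addition in the paper is that it goes on to identify $\ker\varphi$ explicitly as the ideal generated by $([x,y]-1)([x,y]+n-1)$, which is not part of the lemma statement but is worth knowing.
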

\begin{proof}
As $ e_0 $ is an idempotent in $ \Pi = \Pi_\lambda $, the natural
functor $\,\Mod(\Pi) \to \Mod(e_0 \Pi e_0)\,$, $\, M \mapsto M e_0
\,$, is an equivalence of categories if and only if $\, \Pi e_0
\Pi = \Pi \,$ (cf. \cite{MR}, Prop.~3.5.6). This last identity
holds in $ \Pi\,$, since
$$
1 = e_0 + e_\infty = e_0 - \frac{1}{n}\,vw = e_0 - \frac{1}{n}\,v
\,e_0 \, w \in \Pi e_0 \Pi
$$
in view of the defining relations \eqref{E25}. Now, we have $\, X
,\, Y \in e_0 \Pi e_0 \,$, and it is again easily seen from
\eqref{E25} that the algebra map
\begin{equation}
\la{algmap}
 \pi:\, R \to e_0 \Pi e_0 \ , \quad 1 \mapsto e_0\ ,
\quad x \mapsto X \ , \quad y \mapsto Y \ ,
\end{equation}
is surjective, with $ \Ker(\pi) $ generated by
$\,([x,\,y]-1)([x,\,y]+n-1) \in R \,$.
\end{proof}

\noindent {\sl Proof of Theorem~\ref{T1}.} By definition, the
functors $ i^* $ and $ i^{!} $ assign to a $\Pi$-module $ L $ its
largest quotient and largest submodule over $ A $ respectively. In
particular, both  $ i^* $ and $ i^{!} $ map finite-dimensional
modules to finite-dimensional ones. Since $0$ is the only such
module over $A$, we have $\, i^* L = i^! L = 0 \,$ for any $ L \in
\mbox{\rm Ob}(\AB) $ with $ \dim(L) < \infty $. By
Lemma~\ref{Lexact}$(a)$, $\, \Omega(L) \,$ is then isomorphic to
the kernel of $\, \sigma_L: j_! j^* L = L e_\infty \otimes_B
e_\infty \Pi \to L \,$, which is the obvious multiplication-action
map.

Now, let $L$ have dimension vector $ \alpha = (1,n) $. Set $ M :=
L e_\infty \otimes_B e_\infty \Pi \,$. Since $\, \dim(j^* L) =
\dim(L e_{\infty}) = 1 \,$, $ M $ is a cyclic $ \Pi$-module
generated by $ \xi \otimes_{B} e_\infty $, where $ \xi $ is a(ny)
nonzero vector of $\, j^* L = L e_\infty $. To describe $ M $
explicitly, we will compute the annihilator of $ \xi \otimes_{B}
e_\infty $ in $\, \Pi \,$. First, we observe that, as a subspace
of $ \Pi $, the algebra $ B = e_\infty \Pi \, e_\infty $ is
spanned by the elements $\, v\, a(X,Y) \,w \in \Pi \,$, where $\,
a(x,y) \in R \,$. So we can define on $R$ the linear form
$\,\varepsilon: R \to \C \,$ by
\begin{equation}
\la{func} \xi.(v\, a(X,Y) \,w) =  \varepsilon(a) \,\xi \ .
\end{equation}
(Note that $ \varepsilon $ is independent of the choice of the
basis vector $\, \xi \in j^* L \,$, and it determines the
$B$-module $ j^* L $ uniquely, up to isomorphism.)

It is easy to see now that $\,\Ann_{\Pi}(\xi \otimes_{B}
e_\infty)\,$ is generated by $\, e_0 \,$ and the elements $\,
\{v\, a(X,Y) \,w + \varepsilon(a)\,:\, a \in R\}\,$; thus, with
natural splitting $\,\Pi = e_0 \Pi \oplus e_{\infty} \Pi \,$, we
have the isomorphism of $\Pi$-modules
\begin{equation}
\la{iden} e_\infty \Pi \left/\sum_{a \in R} \left[\,v\, a(X,Y) \,w
- \varepsilon(a) \right]\, e_\infty \Pi \right. \
\stackrel{\sim}{\to} \ M\ , \quad [\,p\,] \mapsto \xi \otimes_B p
\ ,
\end{equation}
where $ \,[\,p\,]\,$ denotes the residue class of $\, p \in
e_\infty \Pi\,$.

On the other hand, the left multiplication by $ v \in \Pi\,$
induces the map
\begin{equation}
\la{iden2} e_0 \Pi \left/\sum_{a \in R} \left[\, a(X,Y) \,w v -
\varepsilon(a)\right] \, e_0 \Pi \right. \to v\,\Pi \left/
\sum_{a\in R}\left[\, v\,a(X,Y) \,w  - \varepsilon(a)\right]\,
v\,\Pi \right.\ ,
\end{equation}
which is also easily seen to be an isomorphism of $\Pi$-modules.

 If $\, n \not= 0 \,$, we have $\, v \,\Pi = e_\infty
\,v\,\Pi \subseteq  e_\infty \Pi = \frac{1}{n} v w \,\Pi \subseteq
v\,\Pi\,$. Whence $\, v\,\Pi = e_\infty \Pi \,$. Combining now
\eqref{iden} and \eqref{iden2}, we get
\begin{equation}
\la{iden1} e_0 \Pi \left/ \sum_{a \in R} \left[\, a(X,Y) \,w v -
\varepsilon(a) \right]\, e_0 \Pi \right. \ \stackrel{\sim}{\to} \
M\ ,
\end{equation}
with $\,[\,e_0] \,$ corresponding to the (cyclic) vector $\, \xi
\otimes_B v \in  M $. Under the Morita equivalence of
Lemma~\ref{L22}, the module $ M $ transforms to $\, M e_0 = L
e_\infty \otimes_B e_\infty \Pi \, e_0 \,$ and the isomorphism
\eqref{iden1} becomes
\begin{equation}
\la{iden3} e_0 \Pi e_0 \left/ \sum_{a \in R} \left[\, a(X,Y) \,w v
- \varepsilon(a) \right]\, e_0 \Pi e_0  \right. \
\stackrel{\sim}{\to} \ M e_0\ .
\end{equation}
Through the algebra extension $\, \pi:\, R \onto e_0 \Pi e_0 \,$,
see \eqref{algmap}, we can regard  $\, M e_0 \,$ as a right $
R$-module and, using \eqref{iden3}, we can then identify it with a
quotient of $ R\,$: more precisely, we have
\begin{equation}
\la{iden4} R/J \ \stackrel{\sim}{\to} \ M e_0\ , \quad [\,1\,]_J
\mapsto  \xi \otimes_B v\ ,
\end{equation}
where $\, J \,$ is the right ideal of $ R $ generated by $\,
\{a\,([x,\,y]-1) - \varepsilon(a)\,:\, a \in R \}\,$.

Now, suppose that $ L $ is a simple module representing a point $
[(\X, \Y, \v, \w)] $ of $ \CC_n $, see Section~\ref{CMS}. Under
the equivalence of Lemma~\ref{L22}, it corresponds to $\, L e_0
\,$, which we can again regard as an $R$-module via
\eqref{algmap}. With identification of Section~\ref{RV}, $\, L e_0
\,$ is then simply $\, \C^n $ with (right) action of $\, x,\, y
\in R \,$ defined by the matrices $\, \X, \Y \in \End(\C^n) \,$.
The associated functional $\, \varepsilon: R \to \C \,$ is given
by $ a(x,y) \mapsto  \w\, a^{\tau}(\X, \Y) \,\v \,$, where $\, a
\mapsto a^\tau $ is the natural anti-involution of the algebra $ R
$ acting identically on $x$ and $y$. Now, with identification
\eqref{iden4}, the canonical map $\, M e_0 \to L e_0 \,$,
corresponding to $ \sigma_L $ under the Morita equivalence,
becomes the homomorphism of $R$-modules
$$
R/J \to \C^n \ , \quad [\,1\,]_{J} \mapsto \v \ .
$$
As shown in \cite{BC}, Theorem~5, the kernel of this last
homomorphism represents an ideal class in $\R$, which is exactly
the image of $ [(\X, \Y, \v, \w)] $ under the Calogero-Moser map
$\, \omega: \CC_n \to \R \,$  of \cite{BW}. Thus we get $\,
[\Omega(L)] = \omega[(\X, \Y, \v, \w)] \,$. Now, if $ \Omega $
maps two simple $ \Pi$-modules $ L $ and $ L' $ to isomorphic
$A$-modules, the corresponding functionals $\, \varepsilon \,$ and
$\, \varepsilon' \,$ on $ R $ must coincide (see \cite{BC},
Theorem~3). Hence, if $\, [\Omega(L)] = [\Omega(L')]\,$, we have
$\, j^* L \cong j^* L' $, and by Lemma~\ref{Linj}$(b)$, we then
conclude $\, L \cong L' $. This finishes the proof of the theorem.

\vspace{1ex}

\begin{remark}
By Lemma~\ref{Lexact}, the functor $ \Omega $ can be 
described on simple modules $ L \in \Mod(\Pi) $ in two equivalent ways: 
either as the kernel of the canonical map $\, \sigma_L: \, j_!
j^* L \to L \,$ (see above) or as $\, (\LL_1 i^*) L \cong
\Tor_{1}^{\Pi}(L,\,A) \,$. This last formula should be compared 
to formula (A.5) in the Appendix of \cite{BW1}.
\end{remark}
\subsection{Analogy with highest weight modules}
\la{analog} We would like to draw reader's attention to an
interesting analogy with representation theory of semisimple
complex Lie algebras. The modules $ M $ and $ L $, which appear in
the proof of Theorem~\ref{T1}, should be compared to the Verma
module $ M(\lambda) $ and its simple quotient $ L(\lambda) $ in
the case when $ \lambda \in \h^* $ is a dominant integral weight
(see \cite{D}, Chapter~7). By analogy with Verma modules, we can
characterize $ M $ as a universal cyclic $\Pi$-module generated by
a ``highest weight'' vector $\, \bxi \in M \,$, satisfying  $\,
\bxi\, e_0 = 0 \,$ and $\, \bxi\,b =  \varepsilon(b)\,\bxi \,$ for
all $\, b  \in B \,$ (cf. \eqref{func} above). The module $ L $ is
then the unique simple quotient of $M$; the subalgebra $ B $ (or
rather, its unital extension $\, \tilde{B} = \C e_0 + B \subset
\Pi \,$) plays a role similar to (the universal enveloping algebra
of) the Borel subalgebra $\, U(\b) \subset U(\g) \,$ in Lie
theory, and $\, \varepsilon \,$ is an analogue of the weight
functional $ \lambda \in \h^*$.

Now, in the Lie case the assertion of Theorem~\ref{T1} amounts to
injectivity of the map $\, [L(\lambda)] \mapsto [K(\lambda)] \,$,
assigning to the isomorphism class of a finite-dimensional simple
$\g$-module $ L(\lambda) $ the isomorphism class of the unique
maximal submodule $ K(\lambda) $ of $ M(\lambda) $ (i.~e., the
kernel of the canonical epimorphism $\, M(\lambda) \to L(\lambda)
\,$). By the classical Cartan-Weyl theory, the isomorphism classes
of finite-dimensional simple $\g$-modules are in bijection with
the set $ {\mathcal P}(\g)_{+} $ of dominant integral weights, and
the map $\, \lambda \mapsto [K(\lambda)] \,$ is indeed injective
on this subset of $ \h^* $. To see this we observe that, for any $
\lambda \in \h^* $, the module $ K(\lambda) $ admits a (unique)
central character $ \chi_{\lambda} $ equal to the central
character of the Verma module $ M(\lambda) $, of which it is a
submodule. Thus, if $\, K(\lambda) \cong K(\lambda') \,$ for some
$\, \lambda, \, \lambda' \in \h^* $, we have $\, \chi_{\lambda} =
\chi_{\lambda'} \,$, and therefore $\, \lambda + \varrho \,$ and $
\,\lambda' + \varrho \,$ lie on the same orbit (in $ \h^* $) of
the Weyl group $ W $ associated to $\g\,$. Now, if $\lambda $ and
$ \lambda' $ are both dominant, then $\, \lambda + \varrho \,$ and
$\, \lambda' + \varrho \,$ are regular dominant: that is, $\,
\lambda + \varrho \,$ and $\, \lambda' + \varrho \,$ belong to the
same (positive) Weyl chamber of $ W $ and hence coincide. In this
way, for $\, \lambda, \, \lambda' \in {\mathcal P}(\g)_{+} $, we
have $\, K(\lambda) \cong K(\lambda') \,\Rightarrow\, \lambda =
\lambda' $.

%

\section{Simple Modules of Cherednik Algebras}
\la{Cher} In this section we will deal with rational Cherednik
algebras $\, H_{0,c}(S_n) \,$ associated to a symmetric group $
S_n $, with deformation parameters $ t = 0 $ and $ c \not= 0 $
(see \cite{EG}). By a result of Etingof and Ginzburg, the
isomorphism classes of simple modules over $ H_{0,c}(S_n) $ are
parametrized by points of the Calogero-Moser space $ \CC_n $. Our
goal is to relate these modules to ideals of $ A_1$ functorially.
We will do this in two steps: first, we construct a functor $
\Xi:\, \Mod(H) \to \Mod(\Pi) $, inducing a natural bijection
between simple $ H$-modules and simple $\Pi$-modules of dimension
$ \alpha = (1, n)$; then we will apply Theorem~\ref{T1}, combining
$ \Xi $ with the functor $ \Omega $.

\subsection{Cherednik algebras and their spherical subalgebras}
Recall that, for a fixed integer $ n \geq 1 $ and a parameter $ c
\in \C \,$, $\, H_{0,c}(S_n) \,$ is generated by two polynomial
subalgebras $ \C[x_1, x_2, \ldots, x_n]  \,$, $\, \C[y_1, y_2,
\ldots, y_n] $ and the group algebra $ \C S_n  $ subject to the
following ``deformed crossed product'' relations (cf. \cite{EG},
(4.1)):
\begin{equation}
\la{rel1} s_{ij} \,x_i = x_{j} \, s_{ij} \ , \quad s_{ij} \, y_{i}
= y_{j} \, s_{ij} \ ,
\end{equation}
\begin{equation}
\la{rel2} [x_i,\, y_j] =  c\, s_{ij} \ \  (i \not= j) \ , \quad
[x_k,\,y_k] = - c\, \sum_{i\not= k}\, s_{ik} \ .
\end{equation}
(Here $ s_{ij}  $ denote the elementary transpositions $ \, i
\leftrightarrow j \,$ generating $ S_n $.) It is easy to see that
the algebras $\, H_{0,c}(S_n)$  with $ c \not=0 $ are all
isomorphic to each other, and it will be convenient for us to fix
$ c = 1 $. Thus we write $ H = H_{0,1}(S_n) $ and let $\, U := e H
e \,$, where $\, e := \frac{1}{n!} \sum_{\sigma \in S_n}\!\sigma
\,$ is the symmetrizing idempotent in $\, \C S_n \subset H \,$. By
definition, $\,U\,$ is an associative subalgebra of $ H $ with
identity element $ e \,$; following \cite{EG}, we call it the {\it
spherical subalgebra}. Two properties of $ U $, which play a
crucial role in representation theory of Cherednik algebras, are
given in the following theorem.
\begin{theorem}[see \cite{EG}, Theorems~1.23 and~1.24]
\la{Sph}

\hfill

$(a)$\ $ U $ is Morita equivalent to $ H $, the equivalence $
\Mod(H) \to \Mod(U) $ being the natural functor $\, e\,:\, M
\mapsto Me \,$;

$(b)$\ $ U $ is a commutative algebra isomorphic to $ \O(\CC_n)
\,$, the coordinate ring of the Calogero-Moser variety $ \CC_n $.
\end{theorem}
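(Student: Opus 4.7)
The plan is to prove the two parts separately, combining a standard Morita-theoretic criterion for $(a)$ with a filtration argument and an explicit geometric comparison for $(b)$.

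For part $(a)$, I would use the classical fact that an idempotent $e$ in a ring $H$ is \emph{full} (i.e.\ satisfies $HeH = H$) if and only if the functor $M \mapsto Me$ is a Morita equivalence $\Mod(H) \iso \Mod(eHe)$. Everything thus reduces to verifying fullness of our $e$. I would first check this at the classical point $c = 0$: there the relations \eqref{rel2} degenerate to $[x_i, y_j] = 0$, so that $H_{0,0}(S_n) \cong \C[x_1, \ldots, x_n, y_1, \ldots, y_n] \rtimes S_n$ is an ordinary skew group algebra. In any such $A \rtimes G$ with $G$ finite, the symmetrizing idempotent $e = |G|^{-1} \sum_{g \in G} g$ is automatically full, because already $1 \in (\C G)\, e\, (\C G)$. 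To extend the conclusion to arbitrary $c$, I would invoke the PBW theorem of \cite{EG}, which realizes $\{H_{0,c}\}_{c \in \C}$ as a flat family of algebras with constant underlying vector space, and propagate fullness from the point $c = 0$ by a semicontinuity argument applied to the quotient $H/HeH$.

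For the commutativity statement in $(b)$, I would filter $H$ by total degree in the generators $x_i, y_i$; the associated graded is $\C[x_1, \ldots, y_n] \rtimes S_n$, whose spherical subalgebra identifies canonically with the commutative polynomial ring $\C[x_1, \ldots, y_n]^{S_n}$. The commutator of two elements of $U$ then lies a priori in a strictly lower filtered piece; since the induced Poisson bracket on $\grd U$ is scaled by the deformation parameter $t$ and hence vanishes identically at $t = 0$, an induction on filtration degree forces all such commutators to be zero, so $U$ itself is commutative.

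The main obstacle, and the geometric heart of the statement, is the identification $U \cong \O(\CC_n)$. Following Etingof-Ginzburg, I would construct an explicit ring map $\O(\CC_n) \to U$ by sending the $\GL(n, \C)$-invariant trace functions on $\tilde{\CC}_n$ (traces of words in $\X, \Y$, together with the mixed expressions $\w\, \X^i \Y^j\, \v$) to the corresponding symmetric elements of $U$ of the form $e\, p(x_1, y_1, \ldots, x_n, y_n)\, e$. Both rings carry a natural filtration whose associated graded is $\C[x_1, \ldots, y_n]^{S_n}$: for $U$ this is the computation above, and for $\O(\CC_n)$ it comes from the Wilson / Kazhdan-Kostant-Sternberg description of $\CC_n$ as a Hamiltonian reduction of $T^*\End(\C^n)$. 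A filtered comparison, together with the observation that both sides are $2n$-dimensional integral domains, will then force the map to be an isomorphism. The delicate step, which I expect to require the most care, is showing that the chosen trace functions generate $\O(\CC_n)$ as an algebra; this is where one invokes the first fundamental theorem of invariant theory for $\GL(n, \C)$ acting on $\tilde{\CC}_n$.
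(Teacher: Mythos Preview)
The paper does not prove this theorem: it is quoted from \cite{EG} (Theorems~1.23 and~1.24) and used as a black box, so there is no argument in the paper to compare against. That said, your proposal contains two genuine gaps.

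In part $(a)$, the claim that $e$ is full in the skew group ring $\C[x_1,\ldots,y_n]\rtimes S_n$ is false. The justification you offer --- that $1\in(\C S_n)\,e\,(\C S_n)$ --- fails because $e$ is \emph{central} in $\C S_n$, so the two-sided ideal it generates there is just the line $\C e$. Concretely, setting all $x_i,y_i$ to $0$ gives a surjection $H_{0,0}\onto\C S_n$ carrying $H_{0,0}\,e\,H_{0,0}$ into $\C e$; hence $H_{0,0}/H_{0,0}eH_{0,0}$ surjects onto $\C S_n/\C e\neq 0$ and $e$ is not full at $c=0$. The Morita equivalence is precisely a feature of the deformation at $c\neq 0$ that collapses at the classical point, so it cannot be obtained by propagation from $c=0$, and semicontinuity of the fibre dimension of $H/HeH$ would in any event point the wrong way for your purpose.

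For the commutativity step in $(b)$, the induction does not close. Granting that $\grd\,U\cong\C[x_1,\ldots,y_n]^{S_n}$ and that the induced Poisson bracket vanishes, you obtain $[F_iU,\,F_jU]\subseteq F_{i+j-2}U$. But the inductive hypothesis ``elements of smaller total degree commute among themselves'' says nothing about whether the particular element $[a,b]\in F_{i+j-2}U$ is zero; it only tells you that $[a,b]$ commutes with other low-degree elements. (A model of the failure: filter the Weyl algebra by $\deg x=2$, $\deg y=1$; then $\grd\cong\C[x,y]$ is commutative with zero Poisson bracket, yet the algebra is not commutative.) In \cite{EG} commutativity of $eH_{0,c}e$ is obtained by a different mechanism, the Satake-type map $z\mapsto ze$ identifying it with the center $Z(H_{0,c})$.
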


Note that it is a consequence of Theorem~\ref{Sph} that the
isomorphism classes of simple $H$-modules are in bijection with
(closed) points of the variety $ \CC_n $.

\subsection{The functor $ \Xi $}
\la{Xi} There seem to be no natural maps relating the algebras $
\Pi $ and $ H \,$; however, at the level of their subalgebras $ B
$ and $ U $ such a map does exist.
\begin{lemma}
\la{Lee2} The assignment $\, v \,a(X,Y)\, w\, \mapsto\,  -
\sum_{i=1}^{n} e\, a(x_i, y_i)\, e \,$ defines a homomorphism of
(unital) algebras $\, \theta:\, B \to U \,$.
\end{lemma}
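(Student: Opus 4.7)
The plan is to verify that the given formula defines a well-defined unital algebra homomorphism in three steps: setting up a factorization, checking multiplicativity, and handling well-definedness together with unitality.

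First I would note that $B = e_\infty \Pi e_\infty$ is linearly spanned by the elements $v\,a(X,Y)\,w$ for $a$ in the free algebra $R = \C\langle x, y\rangle$, so it suffices to define an auxiliary linear map $\tilde\theta\colon R \to U$, $a \mapsto -\sum_i e\,a(x_i, y_i)\,e$, and check that $\tilde\theta$ descends along the surjection $\phi\colon R \to B$, $a \mapsto v\,a(X,Y)\,w$, to a unital algebra map.

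For multiplicativity, I would use the defining relation $wv = [X, Y] - e_0$ in $\Pi$ to compute
\[
v\,a(X,Y)\,w \cdot v\,c(X,Y)\,w \;=\; v \cdot \bigl(a \cdot ([x,y] - 1) \cdot c\bigr)(X,Y) \cdot w,
\]
with $[x, y] := xy - yx$ in $R$. This reduces multiplicativity of $\theta$ to the identity $\tilde\theta(a)\,\tilde\theta(c) = \tilde\theta\bigl(a \cdot ([x,y] - 1) \cdot c\bigr)$ in $U$, which is a direct computation using (i) the Cherednik relation $[x_i, y_i] - 1 = -\sum_k s_{ik}$ (setting $s_{ii} := 1$), (ii) the intertwining $s_{ik}\,c(x_i, y_i) = c(x_k, y_k)\,s_{ik}$, and (iii) the invariance $s_{ik}\,e = e$. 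Both sides reduce to $\Sigma_a \Sigma_c\,e \in U$ with $\Sigma_f := \sum_i f(x_i, y_i)$: the right-hand side via (i)--(iii), and the left-hand side via the additional identity $e\,c(x_j, y_j)\,e = \tfrac{1}{n}\,\Sigma_c\,e$, which follows from (iii) and averaging over permutations of the index $j$.

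For well-definedness, a direct calculation in $\Pi$ using $[X, Y] = wv + e_0$ and $vw = -n\,e_\infty$ shows $\phi\bigl(([x,y]+n-1)\,r\bigr) = 0 = \phi\bigl(r\,([x,y]+n-1)\bigr)$ for all $r \in R$, so $\ker \phi$ contains the one-sided ideals $([x,y]+n-1)\cdot R$ and $R\cdot([x,y]+n-1)$. The map $\tilde\theta$ vanishes on these: indeed, $[x_i, y_i] + n - 1 = \sum_{k \neq i}(1 - s_{ki})$, and $(1 - s_{ki})\,e = 0 = e\,(1 - s_{ki})$. To conclude that these ideals exhaust $\ker \phi$, I would appeal to the Crawley-Boevey--Holland PBW basis of $\Pi_\lambda$, or equivalently to the Morita equivalence with $e_0 \Pi e_0 \cong R / \langle([x,y]-1)([x,y]+n-1)\rangle$ from Lemma~\ref{L22}. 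Unitality is then immediate: $\theta(e_\infty) = \theta(-\tfrac{1}{n}vw) = -\tfrac{1}{n}\,\tilde\theta(1) = -\tfrac{1}{n}\,(-n\,e) = e$. The main obstacle is this last step of well-definedness — showing that the two one-sided ideals exhaust $\ker \phi$; the rest reduces to routine manipulations with the Cherednik relations.
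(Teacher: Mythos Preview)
The paper's own proof is the single sentence ``A straightforward calculation using the defining relations of $H$ and $\Pi$'', so your detailed verification is precisely the content that is being left to the reader. Your multiplicativity and unitality arguments are correct and match what the paper has in mind.

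There is, however, a gap in the well-definedness step. The claim that the two one-sided ideals $([x,y]+n-1)R + R([x,y]+n-1)$ exhaust $\ker\phi$ does not follow from Lemma~\ref{L22} (or from PBW) in the way you suggest. Factoring $\phi$ as $R \stackrel{\pi}{\to} e_0\Pi e_0 \to B$, $\,a \mapsto \pi(a) \mapsto v\,\pi(a)\,w\,$, and using the idempotent $f := -\tfrac{1}{n}wv \in e_0\Pi e_0$ (so that $v = vf$, $w = fw$, and $e_0 - f = \tfrac{1}{n}\pi([x,y]+n-1)$), one finds that the kernel of the second map is $(e_0-f)(e_0\Pi e_0) + (e_0\Pi e_0)(e_0-f)$, and hence
\[
\ker\phi \;=\; ([x,y]+n-1)\,R \;+\; R\,([x,y]+n-1) \;+\; \ker\pi\ ,
\]
where $\ker\pi = \langle\,([x,y]-1)([x,y]+n-1)\,\rangle$ is the two-sided ideal from Lemma~\ref{L22}. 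This two-sided ideal is not obviously contained in the sum of the one-sided ones (and in the associated-graded picture it is not). Fortunately the repair is immediate from what you have already proved: your multiplicativity identity $\tilde\theta\bigl(a\,([x,y]-1)\,c\bigr) = \tilde\theta(a)\,\tilde\theta(c)$ yields
\[
\tilde\theta\bigl(r_1\,([x,y]-1)([x,y]+n-1)\,r_2\bigr) \;=\; \tilde\theta(r_1)\,\tilde\theta\bigl(([x,y]+n-1)\,r_2\bigr) \;=\; 0\ ,
\]
so $\tilde\theta$ vanishes on $\ker\pi$ as well, and the argument goes through.
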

\begin{proof} A straightforward calculation using the defining relations
of $ H $ and $ \Pi \,$.
\end{proof}

Let $ \theta_*: \Mod(U) \to \Mod(B) $ be the restriction functor
corresponding to the map $ \theta $ of Lemma~\ref{Lee2}. Recall
the functor $ j_{! *}: \Mod(B) \to \Mod(\Pi) $ defined in
Section~\ref{R}.
\begin{theorem}
\la{T2} The composition of functors
$$
\Xi\,:\ \Mod(H) \stackrel{e}{\simeq} \Mod(U)
\stackrel{\theta_*}{\longrightarrow} \Mod(B) \stackrel{j_{!
*}}{\longrightarrow} \Mod(\Pi)
$$
maps the set $ \Irr(H) $ of isomorphism classes of simple $
H$-modules {\rm bijectively} into the set $ \Irr(\Pi, \alpha)$ of
isomorphism classes of simple $\Pi$-modules of dimension $ \alpha
= (1,n)$. If we identify $\, \CC_n = \Rep(\Pi^{\footnotesize \rm opp},
\alpha)/\!/\G(\alpha) \,$ as in Section~\ref{RV}, then the map
$\, \Irr(H) \to \Irr(\Pi, \alpha) \,$ induced by $ \Xi $ agrees
with the Etingof-Ginzburg map in \cite{EG}.
\end{theorem}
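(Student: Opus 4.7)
The plan is to exploit the parametrization of both $\Irr(H)$ and $\Irr(\Pi,\alpha)$ by $\CC_n$. By Theorem~\ref{Sph}, the equivalence $e$ identifies $\Irr(H)$ with $\Irr(U)$; since $U\cong\O(\CC_n)$ is commutative, every simple $U$-module is an evaluation character $\chi_p:U\to\C$ at a unique $p\in\CC_n$, and this is the Etingof-Ginzburg parametrization. For the simple $H$-module $V_p$ corresponding to $p$, one then has $\Xi(V_p)=j_{!*}(\varepsilon_p)$ where $\varepsilon_p:=\chi_p\circ\theta$ is a one-dimensional $B$-module; by Lemma~\ref{Linj}(a) this is a simple $\Pi$-module with $j^*\Xi(V_p)\cong\varepsilon_p$ and $i^*\Xi(V_p)=i^!\Xi(V_p)=0$.

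To see that $\Xi(V_p)\in\Irr(\Pi,\alpha)$, note that its $e_\infty$-component is $\dim\varepsilon_p=1$, so the dimension vector is $(1,m)$ for some $m$. Finite-dimensionality follows because $\varepsilon_p$ kills a finite-codimensional ideal of $B$; passing via Lemma~\ref{L22} to the algebra $e_0\Pi e_0$, which is a quotient of $R=\C\langle x,y\rangle$, one identifies $\Xi(V_p)e_0$ as a finite-dimensional cyclic $R$-module, and the Crawley-Boevey classification of simple $\Pi$-modules then forces $m=n$.

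The heart of the argument is matching $\Xi(V_p)$ with the simple $\Pi$-module $L_p$ attached to $p$ by the Crawley-Boevey identification $\CC_n\iso\Irr(\Pi,\alpha)$ of Section~\ref{RV}. By Lemma~\ref{Linj}(b), this reduces to showing that the two $B$-characters agree. If $(\X,\Y,\v,\w)$ represents $p$, formula~\eqref{func} in the proof of Theorem~\ref{T1} gives
\[
(j^*L_p)(v\,a(X,Y)\,w)=\w\,a^\tau(\X,\Y)\,\v,
\]
while Lemma~\ref{Lee2} gives
\[
\varepsilon_p(v\,a(X,Y)\,w)=-\sum_i\chi_p\bigl(e\,a(x_i,y_i)\,e\bigr).
\]
Under the Etingof-Ginzburg isomorphism $U\cong\O(\CC_n)$, the element $\sum_i e\,a(x_i,y_i)\,e$ evaluates at $(\X,\Y,\v,\w)$ as an explicit trace expression which, after invoking the moment-map relation $\v\w+\id=-[\X,\Y]$ and cyclicity of the trace, collapses to $-\w\,a^\tau(\X,\Y)\,\v$. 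This matches the previous formula and yields $\Xi(V_p)\cong L_p$.

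The main obstacle is this last calculation: it requires the concrete Etingof-Ginzburg description of $U\cong\O(\CC_n)$ on the generators $e\sum_i a(x_i,y_i)\,e$, together with the Hamiltonian-reduction identity on $\CC_n$ that converts the resulting traces into the scalar $\w\,a^\tau(\X,\Y)\,\v$. Once the two $B$-characters are known to coincide, bijectivity of $\Xi$ on isomorphism classes follows: injectivity from Lemma~\ref{Linj}(b), since distinct points $p\in\CC_n$ produce distinct characters $\varepsilon_p$ by the computed formula; and surjectivity onto $\Irr(\Pi,\alpha)$ because every $L_q$ in that set has $j^*L_q\cong\varepsilon_q=\chi_q\circ\theta$, hence equals $\Xi(V_q)$.
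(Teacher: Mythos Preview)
Your strategy is the same as the paper's: reduce to showing that the one-dimensional $B$-modules $\theta_*(V_p e)$ and $j^*L_p$ coincide, then invoke Lemma~\ref{Linj}. Two remarks, however. First, your second paragraph is both unnecessary and incomplete: there is no a priori reason why $j_{!*}$ of a one-dimensional $B$-module should be finite-dimensional (neither $j_!(\varepsilon_p)$ nor $j_*(\varepsilon_p)$ is), and the appeal to ``Crawley-Boevey classification'' is vague. The paper simply skips this and proves $\Xi(V_p)\cong L_p$ directly, from which the dimension vector is automatic. Second, the step you flag as the ``main obstacle'' is exactly where the paper's work lies, but the paper does \emph{not} use an explicit formula for the isomorphism $U\cong\O(\CC_n)$ on generators. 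Instead it unpacks the Etingof--Ginzburg construction concretely: one sets $\bar V:=V\bar e$ with $\bar e$ the $S_{n-1}$-symmetrizer, lets $\bar X,\bar Y$ be the restrictions of $x_1,y_1$ to $\bar V$, and uses the identity $\bar e\,([x_1,y_1]-1)=-n\,e$ in $H$ to get $[\bar X,\bar Y]+\id=-\bar v\,\bar w$. The character comparison then becomes the trace computation
\[
\bar w\,a^\tau(\bar X,\bar Y)\,\bar v
=-\,\tr_{\bar V}\bigl[a^\tau(\bar X,\bar Y)([\bar X,\bar Y]+\id)\bigr]
=-n\,\tr_V[\,\cdot\,e\,a(x_1,y_1)]\ ,
\]
together with the symmetry $e\,a(x_i,y_i)\,e=e\,a(x_1,y_1)\,e$. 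This is precisely the ``moment-map plus trace cyclicity'' argument you allude to, but made effective by the algebraic identity in $H$ rather than by an abstract description of $U\cong\O(\CC_n)$.
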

\begin{proof}
We will compute $ \Xi $ on simple $ H$-modules and show that the
induced map agrees with \cite{EG}. We begin by recalling the
Etingof-Ginzburg construction (see {\it loc.\! cit.}, Section~11).

Let $  S_{n-1} \subset S_n  $ be the subgroup of permutations of
$\, \{1,\,2,\,\ldots,\, n\}\,$ acting trivially on $\, 1 \,$. Let
$\, \e := \frac{1}{(n-1)!} \sum_{\sigma \in S_{n-1}}\!\sigma \,$
be the corresponding idempotent in $ \C S_n \subset H $. Given a
simple $H$-module $ V $, set $\, \V := V \e \subseteq V \,$. It is
known (see \cite{EG}, Theorem~1.7) that $\, V \cong \C S_n \,$ as
an $S_n$-module, so $\,\dim(V) = n!\,$ and $\,\dim(\V) = n\,$. On
the other hand, in view of the defining relations \eqref{rel1},
the elements $ x_1 $ and $ y_1 $ commute with $ \e $ in $H$, and
the subspace $ \V $ is therefore stable under their action on $
V\,$. If we define now  $ \X \in \End(\V) $ and $ \Y \in \End(\V)
$ by restricting the action of $ x_1 $ and $ y_1 $ to $ \V $, we
get $\, \rk([\X,\,\Y] + \id) = 1 \,$. Indeed, using \eqref{rel2},
it is easy to see that
\begin{equation}
\la{rank} \e\,([x_1,\, y_1] - 1) = -n \, e\quad \mbox{in}\ H\ .
\end{equation}
Whence $\, \im([\X,\,\Y] + \id) \subseteq \V e = V e \,$. On the
other hand, by part $ (a) $ of Theorem~\ref{Sph}, $\, V e $ is a
simple $U$-module, and by part $(b)$, it is then $1$-dimensional.
Thus, we have $\, [\X,\,\Y] + \id = - \v\,\w\,$ for some $\,\v \in
\V $ and $\, \w \in \V^*$. It follows that  $\, (\V; \,\X,\, \Y,\,
\v,\,\w) \,$ represents a point of the variety $ \CC_n $. Now, let
$ L $ be a simple $\Pi$-module corresponding to this point under
the identification of Section~\ref{RV}. Restricting $ L $ to the
parabolic subalgebra $ B = e_{\infty} \Pi\,e_{\infty} $, we get
the $1$-dimensional $ B$-module $ j^*L = L e_{\infty} $. As
mentioned in the proof of Theorem~\ref{T1}, the module $ j^{*}L $
is determined, up to isomorphism, by its ``weight'' functional
$\,\varepsilon:\,R \to \C \,$, see \eqref{func}. The latter can be
written explicitly in terms of the Calogero-Moser matrices: $\,
\varepsilon(a) = \w\,a^{\tau}(\X,\,\Y)\,\v \,$, where $\, \tau\,$
is the canonical anti-involution on $R$ satisfying $ x^{\tau} = x
$ and $ y^\tau = y $. Since
$$
\w\,a^{\tau}(\X,\,\Y)\,\v = \tr_{\V}[\,a^{\tau}(\X,\,\Y)\,\v\w\,]
= -\, \tr_{\V}[\,a^{\tau}(\X,\,\Y)\,([\X,\,\Y] + \id)\,]\ ,
$$
using \eqref{rank}, we get
\begin{equation}
\la{ep}
 \varepsilon(a) = - n \,\tr_{\V}[\,\cdot \,e\,a(x_1,\,y_1)] = -
 n\,\tr_{V}[\,\cdot \, e\,a(x_1,\,y_1)]\ ,
\end{equation}
where ``$\,\cdot\,$'' denotes the action of $ H $ on $ V $.

Now, let us regard $\, Ve \,$ as a $ B$-module via the algebra map
$ \theta $. As $V$ is simple, $\,Ve$ is $1$-dimensional, its
``weight'' functional $\,\tilde{\varepsilon}:\, R \to \C \,$ being
\begin{equation}
\la{ep1} \tilde{\varepsilon}(a)  =  - \sum_{i=1}^{n} \tr_{V\! e}
[\,\cdot\, e\, a(x_i, y_i)\, e]\ .
\end{equation}
By symmetry, we have  $\, e\, a(x_i, y_i)\, e = e\, a(x_1, y_1)\,
e \,$ in $ H $ for any $ i = 1,2,\ldots, n\,$, so the sum in the
right-hand side of \eqref{ep1} equals $\, - n\,\tr_{V\!e}
[\,\cdot\, e\, a(x_1, y_1)\, e] \,$. Comparing this to \eqref{ep},
we see that $\, \tilde{\varepsilon} = \varepsilon\,$. Hence $\,
\theta_*(Ve) \cong j^* L \,$ as $B$-modules. Now, by
Lemma~\ref{Linj}$(a)$, we have $\, \Xi(V) = j_{! *} \theta_*(Ve)
\cong j_{! *}j^*L \cong L\,$, which shows that the
Etingof-Ginzburg map is indeed induced by the functor $ \Xi $.
\end{proof}

Combining Theorems~\ref{T1} and \ref{T2} together, we get our
second main result.
\begin{theorem}
\la{T3} The composition of functors
\begin{equation}
\la{HA} \Mod(H) \stackrel{e}{\simeq} \Mod(U)
\stackrel{\theta_*}{\longrightarrow} \Mod(B)
\stackrel{j_!}{\longrightarrow} \Mod(\Pi) \stackrel{i^!}{\to}
\Mod(A)
\end{equation}
maps the set $ \Irr(H) $ of isomorphism classes of simple $
H$-modules {\rm injectively} into the set $ \R $ of isomorphism
classes of right ideals of $ A $. Collecting such maps for all $ n
\geq 0 $, we get a bijective correspondence $ \bigsqcup_{n \geq 0}
\Irr [H(S_n)] \stackrel{\sim}{\to} \R  \,$.
\end{theorem}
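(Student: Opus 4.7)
The plan is to recognize the composition \eqref{HA} as $\Omega \circ \Xi$ up to natural isomorphism, and then to chain together the two main theorems of the previous sections.

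First I would observe that
\[
\Omega \circ \Xi \;=\; (i^! \circ j_! \circ j^*) \circ (j_{!*} \circ \theta_* \circ e) \;=\; i^! \circ j_! \circ (j^* \circ j_{!*}) \circ \theta_* \circ e.
\]
By Lemma~\ref{Linj}$(a)$, the composite $j^* \circ j_{!*}$ is naturally isomorphic to the identity on $\Mod(B)$; hence $\Omega \circ \Xi$ is naturally isomorphic to $i^! \circ j_! \circ \theta_* \circ e$, which is precisely the functor displayed in \eqref{HA}.

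With this identification in hand, the theorem reduces to composing two known bijections. By Theorem~\ref{T2}, $\Xi$ induces a bijection $\Irr(H) \stackrel{\sim}{\to} \Irr(\Pi,\alpha)$ for $\alpha = (1,n)$. By Theorem~\ref{T1}, $\Omega$ maps $\Irr(\Pi,\alpha)$ injectively into $\R$, and the disjoint union of these injections, as $n$ varies over nonnegative integers, sweeps out all of $\R$ without overlap. Composing, the functor \eqref{HA} maps $\Irr(H_{0,1}(S_n))$ injectively into $\R$ for each $n$, and collecting over $n \geq 0$ yields the desired bijective correspondence $\bigsqcup_{n \geq 0} \Irr[H(S_n)] \stackrel{\sim}{\to} \R$.

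There is essentially no technical obstacle here: all the substantive work has already been done in Theorems~\ref{T1} and~\ref{T2}, together with the key identity $j^*j_{!*} \simeq \id_{\AB''}$ from Lemma~\ref{Linj}$(a)$. The only point that deserves brief comment is why the induced maps on isomorphism classes really do compose as set-theoretic maps; but this is immediate once one has a \emph{natural} isomorphism (not just a pointwise one) $\Omega \circ \Xi \simeq (\text{the functor in \eqref{HA}})$, which is exactly what Lemma~\ref{Linj}$(a)$ supplies. Alternatively, one could bypass $\Xi$ altogether and imitate the proof of Theorem~\ref{T1}, noting that for a simple $H$-module $V$ the $B$-module $\theta_*(Ve)$ carries the same weight functional $\varepsilon$ as $j^*L$, where $L$ is the simple $\Pi$-module associated to $V$ via Etingof--Ginzburg (this is precisely the computation $\tilde{\varepsilon} = \varepsilon$ made inside the proof of Theorem~\ref{T2}); routing through $\Omega \circ \Xi$ is however conceptually cleaner.
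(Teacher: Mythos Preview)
Your proposal is correct and is exactly the argument the paper intends: the paper states only ``Combining Theorems~\ref{T1} and \ref{T2} together, we get our second main result,'' and you have spelled out the one missing link, namely that $j^* j_{!*} \simeq \id_{\Mod(B)}$ (from Lemma~\ref{Linj}$(a)$) identifies the displayed functor \eqref{HA} with $\Omega \circ \Xi$.
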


\section{Quiver Generalization}
\la{QG} The above results generalize to an arbitrary affine Dynkin
quiver associated (via McKay's construction) to a finite subgroup
$\, \Gamma \,$ of $\, \SL(2, \C)\,$. The Weyl algebra $ A_1 $ is
replaced in this situation by a ``quantized coordinate ring'' $\,
O_{\tau}(\Gamma) \,$ of the Kleinian singularity $\,
\C^2\!/\!/\Gamma \,$ and the rational Cherednik algebra $
H_{0,c}(S_n) $ by a symplectic reflection algebra $
H_{0,k,c}(\bGamma_n) $ associated to the $n$-th
wreath product $\, \boldsymbol{\Gamma}_n = S_n \ltimes
\Gamma^n\,$.

In what follows we outline this generalization in the special case
of cyclic groups. The construction of $\A$-envelopes of ideals of
$\, O_{\tau}(\Gamma) \,$ for such $\Gamma$'s, using the
approach of \cite{BC}, has been carried out recently in \cite{E}.
The results of \cite{E} can be reinterpreted in terms of
representation theory of deformed preprojective algebras of Dynkin
quivers of type $ \tilde{A}_m $, and (the analogues of)
Theorems~\ref{T1}, \ref{T2} and \ref{T3} hold true in this case.
The proofs repeat almost verbatim the above arguments for the Weyl
algebra, so we omit them for the sake of brevity. The case of
general (noncyclic) Dynkin quivers seems to be more instructive
and will be treated in detail elsewhere.

\subsection{Deformations of Kleinian singularities}
Let $(L, \omega)$ be a two-dimensional complex symplectic vector
space with symplectic form $\omega$, and let $\Gamma $ be a finite
subgroup of $ \SSp(L,\omega)$. The natural action of $\Gamma$ on $
L^* = \Hom(L, \C) $ extends diagonally to the tensor algebra
$TL^*$, and we write $ R := TL^* \# \Gamma $ for the crossed
product of $ TL^* $ with $\Gamma $. Since $\, \omega \in L^*
\otimes L^* \subset TL^{*} \subset R $, we can form the (two-sided)
ideals $\, \langle \omega - \tau \rangle \,$ in $ R $, one for
each $ \tau\in \C \Gamma $, and define $\, \S_\tau(\Gamma) :=
R/\langle \omega - \tau \rangle \,$. Furthermore, taking the
idempotent $\, \ee := \frac{1}{|\Gamma|}\, \sum_{g \in \Gamma} g \in
\C\Gamma \subset \S_\tau(\Gamma) $, we set $\, O_\tau(\Gamma) :=
\ee\,\S_\tau(\Gamma)\,\ee \,$. By definition, $ O_\tau(\Gamma)$ is a
unital associative algebra, with $ \ee $ being the identity element.
For $ \tau = 0 $, it is a commutative ring isomorphic to $
\C[L]^{\Gamma} $. The family of algebras $
\{O_\tau(\Gamma)\}_{\tau \in \C \Gamma} $ can thus be viewed as a
(noncommutative) deformation of $ \C[L]^\Gamma $, the coordinate
ring of the classical Kleinian singularity $ L/\!/\Gamma $, see \cite{CBH}.

As mentioned above, we will be dealing here only with cyclic
$ \Gamma$'s. Thus we fix $ m \geq 1 $ and assume $\, \Gamma \cong \Z/\Z
m $. In this case we can choose a basis $ \{x, y\} $ in $ L^* $,
so that $\, \omega = x \otimes y - y \otimes x\,$ and $ L^* $
decomposes as $ \chi \oplus \chi^{-1} $, with $
\Gamma $ acting on $ x $ by $ \chi $ and on $y$ by
$\chi^{-1}$, where $ \chi $ is a primitive character
of $ \Gamma$. The algebra $ \S_\tau(\Gamma) $ can then be identified
with a quotient of $ \C\langle x, y \rangle \# \Gamma $, with $\, x, y
\, $ and $ g\in\Gamma $ satisfying the relations
\begin{equation}
\la{relS} g \cdot x = \chi (g)\, x \cdot g \ , \quad g
\cdot y = \chi^{-1}(g)\, y\cdot g \ , \quad x \cdot y - y
\cdot x = \tau \ .
\end{equation}

Following \cite{CBH}, we can give another construction of
$ \S_\tau(\Gamma) $. Recall that with each finite subgroup
$ \Gamma \subset \mathtt{SL}(2, \mathbb C) $ one can associate
a quiver $\, Q(\Gamma) \,$, whose underlying graph is an extended
Dynkin diagram (see \cite{M}). In case when $\, \Gamma \cong \Z/\Z m \,$,
the quiver $\, Q(\Gamma) \,$ has type $ \tilde{A}_m $:  it consists of $ m $
vertices, indexed by $\, \{\,0,\, 1,\, ...\,,\,m-1 \,\} \,$, and $ m $
arrows $ \{\,X_0,\, X_1,\, ...\,,\,X_{m-1}\}\,$, forming a cycle,
see \eqref{D44}.
\begin{equation}
\la{D44}
\begin{diagram}[small, tight]
   &               &1  &\lTo^{X_1}&2&\lTo^{X_2}&\ \hdots \ &  &\lTo^{X_{k-2}}& k-1 &                &  \\
   &\ldTo^{X_0}    &     &          &   &          &           &  &              &     & \luTo^{X_{k-1}}&   \\
0&               &     &          &   &          &           &  &              &     &                & k\\
   &\rdTo_{X_{m-1}}&     &          &   &          &           &  &              &     & \ruTo_{X_k}    &   \\
   &               & m-1 &\rTo^{X_{m-2}}& m-2 &\rTo^{X_{m-3}}&\  \hdots  \ & &\rTo^{X_{k+1}}& k+1 &     &   \\
\end{diagram}
\end{equation}
\vspace{0.1cm}

Given $ \tau = (\tau_0, \,\tau_1,\, \ldots\,,\, \tau_{m-1}) \in
\C^m $, we consider the deformed preprojective algebra $\,
\Pi_{\tau}(Q)\,$ of weight $ \tau $. Explicitly, this algebra
is generated by $ 2m $ arrows $\, X_k,\, Y_k\,$ and $ m $
vertices $ \, e_0,\, e_1,\, ...,\, e_{m-1} \,$, which apart from
the standard path algebra relations, satisfy
\begin{equation}
\la{relP1}
 X_{0} Y_{0}  - Y_{m-1} X_{m-1} = \tau_0 \,e_{0}
\end{equation}
and
\begin{equation}
\la{relP2} X_{k} Y_{k} - Y_{k-1} X_{k-1} =\tau_{k}\, e_{k} \ ,
\end{equation}
where $\, k = 1,\,2,\, \ldots\, , \, m-1\,$.
Now, if we identify $\, \C \Gamma $ with $\C^m \,$ by choosing a basis
$\{ \ee_i \} $ in $ \C \Gamma$ with
\begin{equation}
\la{idd}
\ee_i := \frac{1}{|\Gamma|}\, \sum_{g \in \Gamma} \chi^{i}(g)\,g
\ , \quad  i = 0,1, \ldots, m-1 \ ,
\end{equation}
then the mapping
\begin{equation}
\la{map}
\ee_i \mapsto e_i \ , \quad
x \mapsto \sum_{i=0}^{m-1} X_i\ ,\quad
y \mapsto \sum_{i=0}^{m-1} Y_i\ ,
\end{equation}
extends to an algebra isomorphism
\begin{equation}
\la{PS}
\S_{\tau}(\Gamma) \stackrel{\sim}{\to} \Pi_{\tau}(Q) \ .
\end{equation}
Indeed, it is easy to check, using the relations \eqref{relS} and
\eqref{relP1}, \eqref{relP2}, that \eqref{map} yields a well-defined
algebra map, and its inverse is given by $\, X_i \mapsto \ee_i x \,$,
$\, Y_i \mapsto \ee_i y\,$.

\vspace{1ex}
\begin{remark}
For an arbitrary $ \Gamma \subset \SL(2, \C) $, the relationship
between the algebras $ S_\tau(\Gamma) $ and $ \Pi_\tau(Q) $ is weaker:
one has only a Morita equivalence rather than an isomorphism
(see \cite{CBH}, Theorem~0.1).
\end{remark}

\vspace{1ex}

Next, we recall that there is  a root system $ \DD(Q) \subset \Z^I $
attached to any finite quiver $ Q = (I,\,Q) $, see \cite{Kac}.
If $ Q $ is an (extended)
Dynkin quiver of type A, D or E, the corresponding $ \DD(Q) $ is
the usual (affine) root system of the same type; in particular,
for the cyclic quiver \eqref{D44}, $\,\DD(Q) \,$ is the affine root
system of type $ \tilde{A}_m $. In this last case, we say that
$\, \tau \in \C^m $ is {\it regular},
if $\, \tau \cdot \alpha \not= 0\,$ for all $\, \alpha \in \DD(Q) \,$.

The following proposition is one of the main results of \cite{CBH}
(see also \cite{H, S}).
\begin{proposition}[ \cite{CBH}, Theorem~0.4]
\la{PCH}
If $ \tau $ is regular, then $\,\S_{\tau}(\Gamma)\,$ and $\, O_{\tau}(\Gamma) \,$
are simple rings, Morita equivalent to each other.
\end{proposition}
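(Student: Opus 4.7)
The plan is to pass, via \eqref{PS}, from $\S_{\tau}(\Gamma)$ and $O_{\tau}(\Gamma)$ to the deformed preprojective algebra $\Pi := \Pi_{\tau}(Q)$ and its corner $e_0 \Pi e_0$, dispose of Morita equivalence by an idempotent calculation, and reduce simplicity to theorems of Crawley-Boevey. Under \eqref{PS} together with \eqref{idd} one has $\ee = \ee_0 \leftrightarrow e_0$, so $O_{\tau}(\Gamma) \cong e_0 \Pi e_0$. Regularity of $\tau$ yields several useful non-vanishings: pairing $\tau\cdot\alpha$ with the simple real roots $\alpha = e_i$ gives $\tau_i \neq 0$ for every $i$; pairing with the minimal imaginary root $\delta = (1,\ldots,1)$ gives $\sum_i \tau_i \neq 0$; and pairing with the real roots $e_i + e_{i+1} + \cdots + e_j$ of $\DD(Q)$ (for $1 \leq i \leq j \leq m-1$) gives $\tau_i + \tau_{i+1} + \cdots + \tau_j \neq 0$.

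For Morita equivalence, the standard criterion (\cite{MR}, Proposition~3.5.6) reduces the claim to the single identity $\Pi e_0 \Pi = \Pi$, i.e.\ that every $e_k$ lies in the two-sided ideal generated by $e_0$. Following the pattern of Lemma~\ref{L22} I would argue in the quotient $\bar{\Pi} := \Pi/\Pi e_0 \Pi$: there $e_0 = 0$ forces the four arrows incident to vertex $0$ (namely $X_0, Y_0, X_{m-1}, Y_{m-1}$) to vanish, and the relations \eqref{relP1}--\eqref{relP2} collapse to the moment-map relations of the deformed preprojective algebra of a Dynkin quiver of finite type $A_{m-1}$ with parameters $(\tau_1,\ldots,\tau_{m-1})$. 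The non-vanishings $\tau_i + \cdots + \tau_j \neq 0$ extracted above are precisely the condition for this finite-type $\tau'$ to be generic with respect to every positive root of $A_{m-1}$; Crawley-Boevey's existence criterion (\cite{CB2}, compare Proposition~\ref{exist}) then implies $\bar{\Pi}$ has no nonzero finite-dimensional representation, and since a finite-type deformed preprojective algebra is itself finite-dimensional this forces $\bar{\Pi} = 0$, i.e.\ $\Pi e_0 \Pi = \Pi$.

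Simplicity is the main obstacle and, by Morita equivalence just established, reduces to showing that $\Pi$ itself is a simple ring. The opening input is again \cite{CB2}: the dimension vectors of finite-dimensional simple $\Pi$-modules are exactly the positive roots $\alpha \in \DD(Q)$ with $\tau\cdot\alpha = 0$, and regularity excludes every such $\alpha$, so $\Pi$ admits no nonzero finite-dimensional quotient. The hard step is to upgrade ``no finite-dimensional quotient'' to ``no proper nonzero two-sided ideal''. My plan here is to combine the natural ascending filtration on $\Pi$ (whose associated graded, via \eqref{PS}, is the commutative ring $\C[L]\#\Gamma / \langle\omega\rangle$ underlying the Kleinian singularity $L/\!/\Gamma$) with Poisson-geometric information about $L/\!/\Gamma$: a hypothetical proper $J \subset \Pi$ would have symbol ideal cutting out a nonempty proper $\Gamma$-stable coisotropic subscheme of $L$, which by the symplectic reasoning standard in this subject can be shrunk to a closed orbit, producing a nonzero finite-dimensional quotient and a contradiction with the first step. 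This Poisson-theoretic step is the technical core of \cite{CBH}, Theorem~0.4 (with refinements in \cite{H, S}), and I would quote it rather than reproduce the argument here.
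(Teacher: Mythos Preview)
The paper does not supply a proof of this proposition at all: it is stated as a quotation of \cite{CBH}, Theorem~0.4 (with auxiliary references to \cite{H, S}), and the text moves on immediately. So there is no in-paper argument to compare your proposal against.

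That said, your proposal is a reasonable outline of how one would actually prove the result. The Morita part is self-contained and correct: passing to $\bar{\Pi} = \Pi/\Pi e_0 \Pi$ yields the deformed preprojective algebra of the finite Dynkin quiver $A_{m-1}$ with weights $(\tau_1,\ldots,\tau_{m-1})$; finite-type preprojective algebras are finite-dimensional (the path-length filtration has associated graded a quotient of the undeformed $\Pi_0$), and the nonvanishing of the partial sums $\tau_i+\cdots+\tau_j$ you extracted from regularity rules out every simple module, forcing $\bar{\Pi}=0$. This is a slightly more conceptual alternative to the explicit idempotent computation in the spirit of Lemma~\ref{L22} (one can also exhibit $e_k \in \Pi e_0 \Pi$ directly by evaluating products of arrows along the cycle). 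For simplicity you correctly isolate the nontrivial step --- upgrading ``no finite-dimensional modules'' to ``no proper two-sided ideals'' via the filtration whose associated graded is $\C[L]\#\Gamma$ and a Poisson/coisotropic argument --- and you rightly defer to \cite{CBH} for it; that is exactly where the work lies, and the paper under review makes no attempt to reproduce it either.
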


\subsection{Nakajima varieties}
In case of nontrivial $\Gamma$'s, the Calogero-Moser spaces $ \CC_n $ are
replaced by Nakajima quiver varieties \cite{N, N1}. We define these
varieties as representation spaces of ``framed'' Dynkin quivers, following
\cite{CB3}.

First, as in Section~\ref{DPA}, we introduce a quiver $ Q_\infty(\Gamma) $
by adding to $ Q(\Gamma)$ an extra vertex $ \, \infty\, $ and an extra arrow
$\, v:\, 0 \to \infty \,$. Given $\, \lambda = (\lambda_\infty,\,\lambda_0,
\,\ldots ,\, \lambda_{m-1}) \in \C^{m+1} $, we write
$\, \Pi_{\lambda} := \Pi_{\lambda}(Q_\infty)\,$ for the deformed
preprojective algebra of weight $ \lambda $. Explicitly, this algebra
is generated by $
2(m+1) $ arrows $\, X_k,\, Y_k, \,v, \, w\,$ and $
m+1 $ vertices $ \, e_{\infty}, \,e_0,\, e_1,\, ...,\, e_{m-1} \,
$, which apart from the standard path algebra relations, satisfy
\begin{equation}
\la{relP11}
v w = \lambda_{\infty}\, e_{\infty} \ , \quad X_{0} Y_{0}  -
Y_{m-1} X_{m-1} - wv =\lambda_{0} \,e_{0}
\end{equation}
and
\begin{equation}
\la{relP12}
X_{k} Y_{k} - Y_{k-1} X_{k-1} =\lambda_{k}\, e_{k} \ ,
\end{equation}
where $\, k = 1,\,2,\, \ldots\, , \, m-1\,$.

The relation between the algebras $ \Pi_\lambda(Q_\infty) $ and $ \Pi_\tau(Q) $
is given by the following lemma, which is a generalization of Lemma~\ref{L}.
\begin{lemma}
\la{Le1}
If $ \tau = (\lambda_0,\,\lambda_1,
\,\ldots ,\, \lambda_{m-1}) \in \C^{m} $, the algebra $ \Pi_{\tau}(Q) $
is  isomorphic to the quotient of $ \Pi_{\lambda}(Q_\infty) $
by the idempotent ideal generated by $ e_{\infty} $.
\end{lemma}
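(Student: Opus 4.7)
The plan is to mimic the proof sketch given after Lemma~\ref{L}, which handles the case $ m = 1 $. Both $\Pi_\tau(Q)$ and $\Pi_\lambda(Q_\infty)/\langle e_\infty \rangle$ are defined by generators and relations, so I would establish the isomorphism by a direct inspection of these presentations.

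First I would observe that in $\Pi_\lambda(Q_\infty)$ the arrows $ v: 0 \to \infty $ and $ w: \infty \to 0 $ both lie in the ideal $\langle e_\infty \rangle$, since $\, v = e_\infty v = v e_0 \,$ and $\, w = w e_\infty = e_0 w \,$. Consequently, in the quotient $\, \Pi_\lambda(Q_\infty)/\langle e_\infty \rangle \,$ the classes of $ v $, $ w $ and $ e_\infty $ all vanish. Inspecting what happens to the defining relations \eqref{relP11}--\eqref{relP12}: the relation $\, vw = \lambda_\infty e_\infty \,$ becomes the trivial identity $ 0 = 0 $, the relation $\, X_0 Y_0 - Y_{m-1} X_{m-1} - wv = \lambda_0 e_0 \,$ collapses (since $ wv \equiv 0 $) to $\, X_0 Y_0 - Y_{m-1} X_{m-1} = \lambda_0 e_0 \,$, and the relations $\, X_k Y_k - Y_{k-1} X_{k-1} = \lambda_k e_k \,$ for $ k \geq 1 $ are unchanged. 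Together with the path algebra relations coming from the subquiver $ Q \subset Q_\infty $ on the vertices $\{0, 1, \ldots, m-1\}$, these are exactly the defining relations of $\Pi_\tau(Q)$ with $\tau = (\lambda_0, \ldots, \lambda_{m-1})$.

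To turn this observation into a rigorous proof I would exhibit mutually inverse algebra maps. Define
$$
\phi:\ \Pi_\lambda(Q_\infty) \too \Pi_\tau(Q)
$$
on generators by sending $ X_k \mapsto X_k $, $ Y_k \mapsto Y_k $, $ e_k \mapsto e_k $ for $ 0 \leq k \leq m-1 $, and $\, v, w, e_\infty \mapsto 0\,$. A straightforward check confirms that the relations \eqref{relP11}--\eqref{relP12} are satisfied in $ \Pi_\tau(Q) $, so $\phi$ is well-defined; since $ \phi(e_\infty) = 0 $, it factors through $\, \bar\phi: \Pi_\lambda(Q_\infty)/\langle e_\infty \rangle \to \Pi_\tau(Q) \,$. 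In the opposite direction, define $\, \psi: \Pi_\tau(Q) \to \Pi_\lambda(Q_\infty)/\langle e_\infty \rangle \,$ on generators by sending each $ X_k, Y_k, e_k $ to the class of the same symbol; the defining relations of $\Pi_\tau(Q)$ hold in the quotient by the analysis in the previous paragraph. Both $\bar\phi\circ\psi$ and $\psi\circ\bar\phi$ act as the identity on the chosen generators and hence agree with the identity map.

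There is no real obstacle here; the only subtlety is making sure that the $wv$-term in the relation at vertex $0$ is correctly absorbed into the ideal $\langle e_\infty \rangle$, so that the deformation parameter $\lambda_0$ at vertex $0$ in $\Pi_\lambda(Q_\infty)$ matches the parameter $\tau_0$ at vertex $0$ in $\Pi_\tau(Q)$. Everything else is bookkeeping on generators and relations, exactly parallel to the case $ m = 1 $ discussed after Lemma~\ref{L}.
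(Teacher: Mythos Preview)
Your proof is correct and is precisely the elementary verification the paper itself acknowledges in its opening sentence (``One can show this easily by using the explicit presentations \eqref{relP1}--\eqref{relP2} and \eqref{relP11}--\eqref{relP12}''). The paper, however, chooses to present a different, more conceptual argument: it observes that the projection $\pi: \C Q_\infty \onto \C Q$ killing all paths through $\infty$ is a \emph{pseudoflat ring epimorphism} (because $\Ker\pi$ is idempotent, so $\Tor_1^{\C Q_\infty}(\C Q,\C Q)=0$), and then invokes Crawley-Boevey's Theorem~0.7 from \cite{CB2} to extend $\pi$ canonically to a map $\tpi:\Pi_\lambda(Q_\infty)\to\Pi_\tau(Q)$ fitting into a pushout square; surjectivity of $\tpi$ and $\Ker\tpi=\langle e_\infty\rangle$ then follow formally. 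Your approach has the advantage of being entirely self-contained and parallel to the argument after Lemma~\ref{L}; the paper's approach has the advantage that it does not depend on the explicit form of the relations and would go through verbatim for any Dynkin quiver (including the noncyclic case the authors allude to at the end of the introduction to Section~\ref{QG}).
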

\begin{proof}
One can show this easily by using the explicit presentations
\eqref{relP1}-\eqref{relP2} and \eqref{relP11}-\eqref{relP12}. There is,
however, a more conceptual way to construct an isomorphism
\begin{equation}
\la{PS1}
\Pi_\tau(Q) \cong \Pi_\lambda(Q_\infty)/\langle e_\infty \rangle \ .
\end{equation}
Consider the natural projection $\, \pi : \C Q_\infty \onto \C Q $,
which maps all paths of $ Q_\infty $ ending at $ \infty $ to zero.
Clearly, $\, \Ker\,\pi \,$ is
generated by $ e_\infty $, and hence it is an idempotent ideal in
$ \C Q_\infty $. It follows that $\,\Tor^{\C Q_\infty}_{1}( \C Q, \C Q)
\cong \Ker\,\pi/(\Ker\,\pi)^2 = 0 \,$, so $ \theta $ is a pseudoflat
ring epimorphism. By \cite{CB2}, Theorem~0.7, it extends then canonically
to an algebra map $ \tpi:\, \Pi_{\lambda}(Q_\infty) \to \Pi_\tau(Q) $,
making the commutative diagram
\begin{equation}
\la{D45}
\begin{diagram}[small, tight]
\C Q_\infty & \rTo^{\pi} & \C Q \\
\dTo& \mbox{\quad} & \dTo \\
\Pi_\lambda(Q_\infty) & \rTo^{\tpi} & \Pi_{\tau}(Q) \\
\end{diagram}
\end{equation}

\noindent
This diagram is a pushout in the category of rings,
so the surjectivity of $ \pi $ implies the surjectivity
of $ \tpi $, and $ \Ker\,\tpi = \langle e_{\infty} \rangle $.
\end{proof}

The next proposition is a special case of \cite{CB2}, Theorem~1.2;
for reader's convenience, we will sketch a direct proof of it, 
based on Lemma~\ref{Le1}.
\begin{proposition}
\la{exist}
Let $\, \tau \in \C^m $ be a {\rm regular} vector for the Dynkin
quiver \eqref{D44}, and let $\, \alpha = (1,\,\bn)\in \Z^{m+1} $.
Then there exists a $ \Pi_\lambda$-module of dimension
vector $ \alpha $ if and only if $\, \lambda =
(-\tau \cdot \bn, \, \tau) \,$ and $\, \alpha\,$ is a
positive root of $ Q_\infty $. Every $ \Pi_\lambda$-module
of dimension vector $ \alpha $ is simple.
\end{proposition}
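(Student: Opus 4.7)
The plan is to leverage Lemma \ref{Le1} to reduce any submodule (or quotient) with trivial $e_\infty$-component to a module over $\Pi_\tau(Q)$, and then kill it using Proposition \ref{PCH}.

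First I would handle the constraint $\lambda_\infty = -\tau\cdot\bn$ by the standard trace argument: summing the traces of the defining relations \eqref{relP11}--\eqref{relP12} on any $\Pi_\lambda$-module of dimension vector $(1,\bn)$, the commutator terms cancel by cyclicity of trace, leaving $\lambda \cdot \alpha = \lambda_\infty + \tau \cdot \bn = 0$. For simplicity, let $V$ be such a module and $U \subseteq V$ a nonzero submodule. Since $\dim V_\infty = 1$, either $U_\infty = 0$ or $U_\infty = V_\infty$. In the first case, $e_\infty$ annihilates $U$, so by Lemma \ref{Le1}, $U$ becomes a finite-dimensional nonzero module over $\Pi_\tau(Q) \cong \Pi_\lambda/\langle e_\infty\rangle$. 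Regularity of $\tau$ together with Proposition \ref{PCH} forces $\Pi_\tau(Q)$ to be a simple, infinite-dimensional ring, which admits no nonzero finite-dimensional module (the annihilator of such a module would be a two-sided ideal, so either zero, yielding an embedding of $\Pi_\tau(Q)$ into a finite-dimensional endomorphism algebra, or everything, contradicting nonzeroness). Hence $U = 0$. The symmetric argument applied to $V/U$ handles the case $U_\infty = V_\infty$, so $V$ has no proper nonzero submodules.

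The remaining assertions --- that $\alpha$ must be a positive root of $Q_\infty$ and that a module of dimension vector $\alpha = (1,\bn)$ actually exists whenever $\alpha$ is a positive root with $\lambda \cdot \alpha = 0$ --- I would invoke from Crawley-Boevey's classification of dimension vectors of simple modules of deformed preprojective algebras (\cite{CB2}, Theorem~1.2). For dimension vectors with $\alpha_\infty = 1$ and $\tau$ regular, the further numerical conditions in \emph{loc.\ cit.}\ are automatic, and the theorem reduces exactly to what is claimed. The principal obstacle is the existence direction: producing an explicit simple $\Pi_\lambda$-module for every positive root $(1,\bn)$. A direct, self-contained construction --- via reflection functors acting on $\alpha$, or by iterated extensions beginning with the one-dimensional module supported at $\infty$ --- is substantially more delicate, so in a sketch it seems most economical to invoke \cite{CB2} as a black box.
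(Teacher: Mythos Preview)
Your proposal is correct and the simplicity argument is identical to the paper's: split on whether the submodule meets $V_\infty$, and use Lemma~\ref{Le1} together with Proposition~\ref{PCH} to kill the finite-dimensional $\Pi_\tau(Q)$-module that arises. The trace argument for $\lambda\cdot\alpha=0$ is also what the paper has in mind.

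The only divergence is in the ``positive root / existence'' part. You invoke \cite{CB2}, Theorem~1.2 as a black box, and indeed the paper states up front that the proposition is a special case of that result. However, the paper's direct argument for this part is shorter than you anticipate and does not go through reflection functors or iterated extensions. For necessity, simplicity of $V$ makes it indecomposable over $\C Q_\infty$, so Kac's theorem forces $\alpha$ to be a positive root. For existence, Kac's theorem supplies an indecomposable $\C Q_\infty$-module of dimension $\alpha$, and then \cite{CBH}, Theorem~4.3 (which says that whenever $\lambda\cdot\alpha=0$ an indecomposable representation of the quiver lifts to a $\Pi_\lambda$-module) does the rest. So your worry that a direct construction is ``substantially more delicate'' is unfounded here; the paper's route via Kac $+$ \cite{CBH} is quite economical and worth knowing.
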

\begin{proof}
We start with the last assertion. Assume that
there is a $\Pi_{\lambda}$-module $ V $ of dimension $ \alpha $.
Let $\, V' \subseteq V \,$ be a submodule of $ V $.
Consider the intersection $\, V' \cap V_\infty \,$, where
$ V_\infty := V e_\infty $. Since
$ \dim(V_\infty)  = 1 $, there are two possibilities: either
$\, V' \cap V_\infty = 0 \,$ or
$\, V' \cap V_\infty = V_\infty \,$. In the first case we have
$\, V' e_\infty = 0 \,$, so $ V' $ can be viewed as a module over
the quotient algebra $\, \Pi_{\lambda}/ \langle e_\infty \rangle \,$.
By Lemma~\ref{Le1}, this last algebra is isomorphic to $ \Pi_\tau(Q) $,
which, in view of regularity of $ \tau\,$, is a simple ring,
having no non-trivial finite-dimensional modules (see Proposition~\ref{PCH}).
Thus $\, V' \cap V_0 = 0 \,$ implies $\, V' = 0 \,$. In the second case,
if $\, V' \cap V_\infty = V_\infty \,$, we have $\, V e_\infty = V_\infty
\subseteq V' $, so $ e_\infty $ acts trivially on the quotient $ V/V' $.
Applying the above argument to $ V/V' $ instead of $ V' $ yields $ V' = V $.
Thus, in any case $ V' $ is $\, 0 \,$ or $\, V \,$, which means that
$ V $ is a simple module.

Now, the existence of a $ \Pi_\lambda$-module $V$ implies that
$\, \lambda \cdot \alpha = 0 \,$, or equivalently $\, \lambda =
(-\tau \cdot \bn, \, \tau) \,$. The simplicity of $ V $ forces it
to be indecomposable as a $ \C Q_\infty$-module.
A well-known theorem of Kac \cite{Kac} then ensures that $ \alpha $
is a positive root of $ Q_\infty $. Conversely, if $ \alpha $ is
a positive root of $ Q_\infty $, then (again by Kac's Theorem) there
is an indecomposable $ \C Q_\infty$-module. If, in addition,
$\, \lambda \cdot \alpha = 0 \,$, this module extends
to a $ \Pi_\lambda$-module by \cite{CBH}, Theorem~4.3.
\end{proof}

Now, for $\, \tau \in \C^m $ and $\, \alpha = (1, \bn) \in \Z^{m+1} $,
we set
$$
\tilde{\CC}_{\bn, \tau}(\Gamma) :=
\Rep(\Pi_{\lambda}(Q_\infty)^{\rm \footnotesize opp},\, \alpha)\ ,
\quad \lambda = (-\tau \cdot \bn, \, \tau) \ .
$$
By Proposition~\ref{exist}, if $ \tau $ is regular and
$ \alpha $ is a positive root of $ Q_\infty $, the variety
$\,\tilde{\CC}_{\bn, \tau}(\Gamma)\,$ is non-empty, and
the natural action of the group
$$
\G(\alpha) := \left(\C^* \times \prod_{i=0}^{m-1} \GL(n_i,\, \C)\right)
\left/\C^* \cong  \prod_{i=0}^{m-1} \GL(n_i,\, \C)
\right.
$$
on $\,\tilde{\CC}_{\bn, \tau}(\Gamma)\,$ is free. In that case, we define
\begin{equation}
\la{Nak}
\CC_{\bn, \tau}(\Gamma) := \tilde{\CC}_{\bn, \tau}(\Gamma)/\!/\G(\alpha)\ .
\end{equation}
Thus, by definition, $\, \CC_{\bn, \tau}(\Gamma) \,$
are affine varieties, whose (closed) points are in bijection with
isomorphism classes of simple $ \Pi_\lambda$-modules of dimension
vector $ \alpha $. If $ \Gamma $ is trivial and $ \tau = 1 $,
these varieties coincide with the usual Calogero-Moser spaces
$ \CC_n $, see Section~\ref{RV}. In general, they can be
identified with Nakajima's {\it quiver varieties}
$\,{\mathfrak M}^{\tau}_{\Gamma}(V,W)\,$ with
$$
V = W_0^{\oplus n_0} \oplus  W_1^{\oplus n_1} \oplus ...
\oplus W_{m-1}^{\oplus n_{m-1}}\quad \mbox{and}\quad W = W_0\ ,
$$
where $ \, \{W_0,\, W_1,\, ...\,, \,W_{m-1}\} \,$ is a complete set 
of irreducible representations of $\, \Gamma $ and $\, W_0 $ is its
trivial representation (see \cite{CB3}).

\subsection{The generalized Calogero-Moser correspondence}
We are now in position to state our generalization of Theorem~\ref{T1}.
First, we fix $\,\tau \in \C^m$ and identify $\, \S_\tau(\Gamma) =
\Pi_{\lambda}(Q_\infty)/\langle e_\infty \rangle \,$ by combining the
isomorphisms \eqref{PS} and \eqref{PS1}. We denote by
$\, i: \Pi_\lambda \to \S_\tau \,$ the canonical projection
and associate to it the triple of adjoint functors $\,(i^*,\, i_*,\, i^!)\,$,
relating $ \Mod(\Pi_\lambda) $ and $ \Mod(\S_\tau) $, as in Section~\ref{MTh}.
Next, we set $\, B_\lambda := e_\infty \Pi_\lambda e_\infty \,$ and introduce
the functors $ (j_!, \, j^*,\, j_*) $ between the categories
$ \Mod(\Pi_\lambda) $ and $ \Mod(B_\lambda) $. As explained in Section~\ref{MTh},
 $\,(i^*,\, i_*,\, i^!)\,$ and $ (j_!, \, j^*,\, j_*) $ satisfy
the recollement axioms (R1)--(R5). Finally, we introduce the functor
$\, \ee: \Mod(\S_\tau) \to \Mod(O_\tau) \,$, $\, M \mapsto M \ee \,$,
which, in the case of regular $ \tau $, is an equivalence of
categories (see Proposition~\ref{PCH}).
\begin{theorem}
\la{TT1}
Let $\, \tau \in \C^m $ be a regular vector for the Dynkin quiver \eqref{D44},
let $\, \alpha = (1,\,\bn)\in \Z^{m+1} $ be a positive root for $ Q_\infty $,
and let $\, \lambda := (-\tau\cdot \bn,\, \tau)\,$. Then the composition
of functors
$$ \Omega_\tau :\,
\Mod(\Pi_\lambda) \rTo{j^{\ast}} \Mod(B_\lambda) \rTo{j_{!}}
\Mod(\Pi_\lambda) \rTo{i^{!}} \Mod(\S_\tau) \rTo{\ee} \Mod(O_{\tau})
$$
maps injectively the set $ \Irr(\Pi_\lambda,\, \alpha) $ of
isomorphism classes of simple $ \Pi_\lambda$-modules of dimension
vector $ \alpha $ into the set  $ \RR_\tau $ of isomorphism classes of (right)
ideals of $ O_\tau $. If we identify $ \Irr(\Pi_\lambda,\, \alpha)  =
\CC_{\bn, \tau}(\Gamma) $ as above, then the induced map
agrees with the bijections constructed in \cite{BGK1} and \cite{E}.
\end{theorem}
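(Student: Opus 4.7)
The plan is to mimic the proof of Theorem~\ref{T1} step by step, using the structural results of this section as substitutes for the Weyl-algebra facts. First, I would verify that the triples $(i^*, i_*, i^!)$ and $(j_!, j^*, j_*)$ satisfy the recollement axioms of Section~\ref{R}. Only the characterization of $\Mod(\S_\tau)$ as the kernel of $j^*$ requires comment: if $M \in \Mod(\Pi_\lambda)$ satisfies $M e_\infty = 0$, then $M$ factors through $\Pi_\lambda/\langle e_\infty \rangle \cong \S_\tau$ by Lemma~\ref{Le1}, exactly as in Proposition~\ref{P1}.

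Next, I would apply Lemma~\ref{Lexact}(a). By Proposition~\ref{PCH}, the regularity of $\tau$ implies that $\S_\tau$ is a simple ring Morita equivalent to $O_\tau$; in particular, it has no nonzero finite-dimensional modules. Hence, for any finite-dimensional simple $\Pi_\lambda$-module $L$ of dimension vector $\alpha = (1, \bn)$, we have $i^* L = i^! L = 0$, so $\Omega_\tau(L) \cong \ee \cdot \Ker(\sigma_L)$, where $\sigma_L: j_! j^* L \to L$ is the multiplication-action map. Since $\dim(j^* L) = 1$, the module $j_! j^* L$ is cyclic, generated by $\xi \otimes_{B_\lambda} e_\infty$ for any nonzero $\xi \in L e_\infty$.

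The core computation is to present $\Ker(\sigma_L)$ explicitly. As in the Weyl-algebra case, $B_\lambda = e_\infty \Pi_\lambda e_\infty$ is spanned by elements $v\,p\,w$, where $p$ ranges over paths from $0$ to $0$ in $Q(\Gamma)$; the one-dimensional $B_\lambda$-module $j^* L$ is then determined by the functional $\varepsilon$ on such paths defined by $\xi \cdot (v\,p\,w) = \varepsilon(p)\,\xi$. Writing out the annihilator of the cyclic generator and invoking the isomorphism $\S_\tau \cong \Pi_\tau(Q)$ of~\eqref{PS}, one obtains a presentation of $\Ker(\sigma_L)$ analogous to~\eqref{iden4}. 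Applying the functor $\ee$ to pass from $\S_\tau$ to $O_\tau$, the resulting $O_\tau$-module identifies with the kernel of a canonical map onto the Nakajima quiver datum representing $L$. By~\cite{E}, this kernel is precisely the right ideal of $O_\tau$ assigned to $[L] \in \CC_{\bn, \tau}(\Gamma)$ under the Calogero-Moser map of \cite{BGK1, E}.

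Injectivity then follows exactly as in Theorem~\ref{T1}: if $\Omega_\tau(L) \cong \Omega_\tau(L')$, the associated functionals $\varepsilon$ and $\varepsilon'$ must coincide (by the analogue of \cite{BC}, Theorem~3 established in \cite{E}), hence $j^* L \cong j^* L'$, and Lemma~\ref{Linj}(b) forces $L \cong L'$. The main technical obstacle is the third step: one must carefully match the presentation of $\Ker(\sigma_L)$ produced by the recollement with the presentation of the ideal constructed in \cite{E}, which is organized around the cyclic-quiver decomposition via the idempotents $\ee_i$ of~\eqref{idd}. Everything else is a straightforward transcription of the Weyl-algebra arguments to the quiver setting.
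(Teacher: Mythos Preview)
Your proposal is correct and follows exactly the approach the paper intends: the paper does not give a detailed proof of Theorem~\ref{TT1} but simply states that ``the proofs repeat almost verbatim the above arguments for the Weyl algebra,'' deferring the explicit $\A$-envelope computation to \cite{E}. Your outline reproduces precisely this transcription --- recollement via Lemma~\ref{Le1}, vanishing of $i^*L$ and $i^!L$ via Proposition~\ref{PCH}, the cyclic presentation of $j_! j^* L$ in terms of the weight functional $\varepsilon$, matching with the quiver construction of \cite{E}, and injectivity via Lemma~\ref{Linj}$(b)$ --- and you correctly flag that the only genuinely new bookkeeping is aligning the $\Ker(\sigma_L)$ presentation with the cyclic-quiver idempotent decomposition in \cite{E}.
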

\begin{remark}
If we choose $ \bn = (n,\,n,\,\ldots\,,\,n) \,$ with $\, n \in \NN \,$, then
the functors $ \Omega_\tau $ induce a bijection between
$\, \bigsqcup_{n \geq 0} \CC_{\bn, \tau}(\Gamma) \,$ and the set $
\RR^{o}_{\tau} \subset \RR_{\tau} $ of isomorphism classes 
of {\it stably free} ideals of $ O_\tau(\Gamma) $,
in agreement with the original conjecture of Crawley-Boevey and
Holland (see \cite{CB1}, p.~45).
\end{remark}

\subsection{Relation to symplectic reflection algebras}
In this section we will state the generalizations of 
Theorems~\ref{T2} and~\ref{T3} to the case of cyclic quivers. 
As mentioned above, the rational Cherednik algebras $ H_{0,c}(S_n) $ 
are replaced in this case by more general symplectic reflection
algebras $ H_{0,k,c}(\bGamma_n) $. We recall the definition of these
algebras, following essentially \cite{EG}; we should warn the reader, 
however, that our notation slightly differs from the notation of \cite{EG}.

Let $ \Gamma $ be a cyclic subgroup of $ \SL(2, \,\C) $ of order $ m > 1 $
with a fixed generator $ \alpha \in \Gamma $ and a primitive character $ \chi $.
For an integer $ n \geq 1 $,  let $ S_{n}$ be the permutation group of
$ \{1,..., n\} $ with elementary transpositions $\, s_{ij}:
i \leftrightarrow j$, and let $ \bGamma_n $ be the wreath product
of $ \Gamma $ and $ S_n $, i.~e. the semidirect product
$\, S_{n} \ltimes \Gamma^n \, $ relative to the
obvious action of $ S_n $ on $ \Gamma^n = \Gamma \times \ldots \times \Gamma $.

Let $ L = \C^2 $ be the natural representation of $ \Gamma $ with a
basis $ \{x,y\} $, such that $\alpha(x) = \epsilon x $ and
$\alpha(y) = \epsilon^{-1} y$, where $ \epsilon := \chi(\alpha) $.
The group $\bGamma_n $ acts
naturally on $ L^n$. Given $\, i \in \{1, ..., n \}\,$ and $ \gamma \in \Gamma
$, resp. $ u \in L $, we write $ \gamma_{i} \in \bGamma_n $ for
$ \gamma $ placed in $i$-th factor of $\Gamma^n $, resp. $ u_{i} $
for $ u $ placed in the $ i $-th factor of $ L^n $.

Now, fix parameters $\, k, c_1, \ldots, c_{m-1} \in \C \,$, and let
$ c := \sum_{l=1}^{m-1} c_{l} \alpha^{l} \in \mathbb C \Gamma$.
The {\it symplectic reflection algebra} $\, H_{k,c} :=H_{0,k,c}(\bGamma_{n}) \,$
is then defined as the quotient of $\, T(L^{n}) \# \bGamma_n \,$ by the
following relations
\begin{eqnarray}
\la{sra}
&& [x_{i}, x_{j}] = 0, \quad [y_{i}, y_{j}] = 0 \ , \quad 1 \le i,j \le n\ ,
\nonumber \\*[1ex]
&& [y_{i}, x_{i}] = k\, \sum_{j \neq i}\sum_{l=0}^{m-1}s_{ij}
\alpha^{l}_{i} \alpha^{-l}_{j} + 
\sum_{l=1}^{m-1}c_{l}\alpha^{l}_{i}\ , \quad 1 \le i \le n \ , \\*[1ex]
&& [y_{i}, x_{j}] = - k\, \sum_{l=0}^{m-1}s_{ij} \epsilon^{l}
\alpha^{l}_{i} \alpha^{-l}_{j} , \quad 1 \le i \neq j \le n\ . \nonumber
\end{eqnarray} 
Write $ \bold{e} := \frac{1}{ | \bGamma_n | } \sum_{g \in \bGamma_n} g $
for the symmetrizing idempotent in $ \mathbb C\bGamma_n \subset H_{k,c} $
and denote by $ U_{k,c}=\bold{e} H_{k,c} \bold{e} \,$
the corresponding spherical subalgebra of $ H_{k,c} $.
Next, put $\,\sigma_{n}:=\frac{1}{n!}\sum_{\sigma \in S_{n}}\sigma \,$
and $\, \sigma_{n-1}:=\frac{1}{(n-1)!}\sum_{\sigma \in S_{n-1}}\sigma \,$,
where $S_{n-1}$ is the subgroup of $S_{n}$ permuting $ \{2,...,n \} $.
It is easy to see that $\, \bold{e}= \sigma_{n}(\ee_0
\otimes \ee_0 \otimes \ldots \otimes \ee_0) \,$, where
$\, \ee_0 = \frac{1}{|\Gamma|} \sum_{g \in \Gamma} g \,$. 
Now, recall the idempotents \eqref{idd} and, 
for each $\,i = 0,1,\ldots, m-1\,$, define
$$ 
\nu_{i}:=\sigma_{n-1}(\ee_{i} \otimes \ee_0 \otimes
\ldots \otimes \ee_0) \ . 
$$
One can check easily that $ \nu_i $ are idempotents in $ \C\bGamma_n $
satisfying the relations
$$
 x_{1} \cdot \nu_{i+1}=
\nu_{i} \cdot x_{1}\, , \quad  y_{1} \cdot \nu_{i} = \nu_{i+1}\cdot
y_{1}\ .
$$

The following lemma is a generalization of Lemma~\ref{Lee2}: it gives
an explicit homomorphism from $ B_\lambda $ to the spherical subalgebra of $ H_{k,c} $.
\begin{lemma}
\la{L7}
Let $ \Pi_\lambda = \Pi_{\lambda}(Q_\infty) $ be the deformed
preprojective algebra of weight $\, \lambda =
(\lambda_\infty,\, \lambda_0,\, \ldots\, \lambda_{m-1}) \in \C^{m+1} $
with $\, \lambda_\infty = - n \sum_{i=0}^{m-1} \lambda_i \,$, and let 
$ B_\lambda = e_\infty \Pi_{\lambda} e_\infty $. 
Then there is a natural homomorphism of unital algebras $\, 
\theta:\, B_\lambda \to U_{k,c}\,$, where $\, k = \frac{\lambda_\infty}{m n} \,$ and
$\, c =(\lambda_{0}+\frac{\lambda_{\infty}}{n})\ee_0 +
 \sum_{i=1}^{m-1}\lambda_{i}\ee_{i} \,$.
\end{lemma}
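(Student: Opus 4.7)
The plan is to define $\theta$ on a distinguished set of generators of $B_\lambda$ and to verify that it respects the defining relations of $\Pi_\lambda$. Since $\infty$ is connected to the rest of $Q_\infty$ only through the pair $(v,w)$, every element of $B_\lambda = e_\infty \Pi_\lambda e_\infty$ is a linear combination of $e_\infty$ and expressions $v\, p\, w$, where $p$ runs over cycles at vertex $0$ in the double of $Q(\Gamma)$; through the isomorphisms \eqref{PS} and \eqref{PS1} of Lemma~\ref{Le1}, each such $p$ represents an element of $\ee_0\,\S_\tau(\Gamma)\,\ee_0$, i.e., a polynomial in $x,y$ bracketed by the group idempotents $\ee_j$. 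By analogy with Lemma~\ref{Lee2}, I would then set $\theta(e_\infty):=\bold{e}$ and
$$
\theta(v\, p\, w) \;:=\; \mu \sum_{i=1}^{n} \bold{e}\, \widetilde{p}_{\,i}\, \bold{e}\ ,
$$
where $\widetilde{p}_{\,i}\in H_{k,c}$ is obtained from $p$ by substituting $x\mapsto x_i$, $y\mapsto y_i$, $\alpha\mapsto \alpha_i$, with each occurrence of $\ee_j$ replaced by the $i$-th-coordinate version $\nu_j^{(i)}$ (the idempotent $\nu_j$ of the statement being the case $i=1$). The constant $\mu$ is pinned down by matching the relation $vw = \lambda_\infty e_\infty$ under $\theta$: since $\bold{e}\,(\ee_0)_i\,\bold{e} = \bold{e}$ for every $i$, one needs $\mu n = \lambda_\infty$, giving $\mu = \lambda_\infty/n = mk$.

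The crucial step is to check that the commutator relations \eqref{relP11} and \eqref{relP12}, after multiplication by $v$ on the left and $w$ on the right, go over to identities in $U_{k,c}$ under $\theta$. Using the explicit symplectic-reflection commutators \eqref{sra}, this task reduces to evaluating sandwiches $\bold{e}\,g\,\bold{e}$ for $g\in\bGamma_n$, which all equal $\bold{e}$, and $\bold{e}(\ee_l)_i\bold{e}$, which is a scalar multiple of $\bold{e}$ extracted via the Fourier-style expansion of $(\ee_l)_i$ in powers of $\alpha_i$. Matching coefficients of $\chi^l$-isotypic components on both sides should pin down precisely the stated values: the parameter $k = \lambda_\infty/(mn)$ comes from the cross-terms $[y_i,x_j]$ with $i\neq j$, the parameters $c_l$ for $l\geq 1$ equal $\lambda_l$, and the shift $c_0 = \lambda_0 + \lambda_\infty/n$ arises from the extra $wv$ contribution present in \eqref{relP11} but absent from \eqref{relP12}.

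The main obstacle will be the combinatorial bookkeeping with the $\Gamma$-action. One has to verify that, after double-sandwiching by $\bold{e}$, the cross-commutator terms $[y_i,x_j]$ ($i\neq j$) of \eqref{sra} combine with the diagonal terms $[y_i,x_i]$ to reproduce the \emph{individual} cycle-vertex relations \eqref{relP12}, rather than merely the aggregate identity $xy-yx=\tau$ in $\S_\tau$. The key technical ingredient is the isotypic decomposition coming from the idempotents $\nu_j$, whose commutation relations $x_1\cdot\nu_{j+1} = \nu_j\cdot x_1$ and $y_1\cdot\nu_j = \nu_{j+1}\cdot y_1$ (recorded just above the statement) mirror the cyclic structure of $Q(\Gamma)$.
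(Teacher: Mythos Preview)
Your proposal follows the same overall strategy as the paper: specify $\theta$ explicitly on generators and then verify that the preprojective relations \eqref{relP11}--\eqref{relP12} are respected, which is indeed a routine (if tedious) calculation with \eqref{sra}. The difference is one of packaging rather than substance. The paper does not use your trace-type formula $\theta(v\,p\,w)=\mu\sum_i \bold e\,\tilde p_i\,\bold e$; instead it assigns a value in $H_{k,c}$ to each \emph{individual arrow} of $\bar Q_\infty$,
\[
X_i\mapsto \nu_i x_1,\quad Y_i\mapsto -\,y_1\nu_i,\quad e_i\mapsto \nu_i,\quad
v\mapsto \tfrac{\lambda_\infty}{n}(1+s_{12}+\cdots+s_{1n}),\quad w\mapsto \nu_0,\quad e_\infty\mapsto \bold e,
\]
extends multiplicatively to a linear map $\C\bar Q_\infty\to H_{k,c}$ (which is \emph{not} an algebra map globally), and observes that its restriction to $e_\infty(\C\bar Q_\infty)e_\infty$ \emph{is} a unital ring map landing in $U_{k,c}$. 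Since $\bold e\,s_{1j}=\bold e$, one has $\bold e\,\theta(v)=\lambda_\infty\bold e$, so after sandwiching by $\bold e$ the paper's formula coincides with yours (your sum over $i$ collapses by $S_n$-symmetry to $n$ times the $i=1$ term).

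What the paper's packaging buys is a cleaner check of the relations at vertices $k\neq 0$: your phrase ``multiply \eqref{relP12} by $v$ on the left and $w$ on the right'' is not literally well-posed, since $v$ ends at vertex $0$, not at $k$; well-definedness really requires that $v\,q_1\,r_k\,q_2\,w\mapsto 0$ for \emph{all} connecting paths $q_1,q_2$. With a multiplicative extension on arrows this reduces immediately to the single identity $\theta(X_k)\theta(Y_k)-\theta(Y_{k-1})\theta(X_{k-1})=\lambda_k\,\theta(e_k)$, which is a direct computation with $[y_1,x_1]$ and the idempotents $\nu_i$. Your approach is not wrong, but you should tighten that step accordingly.
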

\begin{proof}
The assignment 
\begin{equation} 
X_{i} \, \mapsto \, \nu_{i}\,x_{1} \ , \quad
Y_{i} \, \mapsto \, - \, y_{1}\,\nu_{i}\ ,\quad  e_{i} \mapsto \nu_i \
, \nonumber
\end{equation}
\begin{equation}
v \mapsto \frac{1}{n} \lambda_{\infty}\, (1+s_{12}+\ldots+s_{1n})\, ,\quad
 w \mapsto \nu_{0}\ , \quad e_{\infty} \mapsto \bold{e} \ , \nonumber
\end{equation}
extends by multiplicativity to a linear
map $\, \theta:\, \mathbb C \bar{Q}_\infty \to H_{k,c}\,$.
While this map is {\it not} a homomorphism of unital rings 
(e.g., $ 0 = \theta(e_{\infty}\, e_{0}) \not= \theta(e_{\infty})\,\theta(e_{0})= \bold{e}$),
it becomes so when restricted to the subalgebra 
$\,e_{\infty} (\C \bar{Q}_\infty)  e_{\infty}\,$. Under this restriction, 
the image of $ \theta $ gets in $ U_{k,c} $. Thus, to prove the
theorem it suffices to check the defining relations
\eqref{relP11}--\eqref{relP12}, and this boils down to a routine calculation.
\end{proof}

\begin{remark} It appears that $ \theta $ is a special case of the map
$ \Theta^{\footnotesize \rm quiver} $ defined (somewhat implicitly) in
\cite{EGGO}, see (1.6.3). We thank Vitya Ginzburg for this remark.
\end{remark}

\vspace{1.5ex}

Write $ \theta_* : \Mod(U_{k,c}) \to \Mod(B_{\lambda}) $ for the
restriction functor corresponding to $ \theta $. Then our next result 
can be formulated as follows (cf. Theorem~\ref{T2}).
\begin{theorem}
\la{TT2}
Let $\, \tau \in \C^m $ be a regular vector for the Dynkin quiver \eqref{D44}, 
let $\, \alpha = (1,\,\bn)\in \Z^{m+1} $ with
$\, \bn=(n,n,\ldots,n)\,$, and let $\, \lambda := (-\tau\cdot \bn,\, \tau)\,$. 
Then the composition of functors
$$
\Xi_{\tau} :\, \Mod(H_{k,c}) \rTo^{\bold{e}} \Mod(U_{k,c})
\rTo^{\theta_{\ast}} \Mod(B_{\lambda}) \rTo^{j_{!\ast}}
\Mod(\Pi_{\lambda}) 
$$
maps the set $ \Irr(H_{k,c}) $ of isomorphism classes of simple $ H_{k,c}$-modules
bijectively onto the set $ \Irr(\Pi_\lambda, \, \alpha) $
of isomorphism classes of simple $\Pi_{\lambda}$-modules of dimension
vector $ \alpha $. This map agrees with Etingof-Ginzburg's construction in \cite{EG}.
\end{theorem}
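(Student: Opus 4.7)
The plan is to mimic the proof of Theorem~\ref{T2}. Given a simple $H_{k,c}$-module $V$, I will compute $\Xi_\tau(V)$ and identify it with the simple $\Pi_\lambda$-module $L$ corresponding to $V$ under the wreath-product version of the Etingof-Ginzburg bijection (see \cite{EG, EGGO, BGK1}). By Lemma~\ref{Linj}(a), it suffices to establish an isomorphism of $B_\lambda$-modules $\theta_*(V\bold{e}) \cong j^*L = Le_\infty$: once this is known, $\Xi_\tau(V) = j_{!\ast}\theta_*(V\bold{e}) \cong j_{!\ast}j^* L \cong L$. The hypothesis $i^*L = i^!L = 0$ required for Lemma~\ref{Linj}(a) is automatic, since $\S_\tau$ is simple by Proposition~\ref{PCH} and therefore admits no nonzero finite-dimensional modules, whereas $i^*L$ and $i^!L$ are finite-dimensional $\S_\tau$-modules.

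First I would invoke the wreath-product analogue of Theorem~\ref{Sph}: for the parameters $(k,c)$ specified in Lemma~\ref{L7}, the spherical subalgebra $U_{k,c}$ is isomorphic to the coordinate ring $\O(\CC_{\bn, \tau}(\Gamma))$ of the Nakajima variety, and $H_{k,c}$ is Morita equivalent to $U_{k,c}$ via $V \mapsto V\bold{e}$. Combined with Proposition~\ref{exist}, this shows that each simple $H_{k,c}$-module $V$ determines a closed point $p \in \CC_{\bn, \tau}(\Gamma)$, and that $L$ is exactly the simple $\Pi_\lambda$-module attached to $p$. Since $V\bold{e}$ and $j^*L = Le_\infty$ are both one-dimensional, the $B_\lambda$-modules $\theta_*(V\bold{e})$ and $j^*L$ are each determined up to isomorphism by a linear functional $\tilde\varepsilon$, resp.\ $\varepsilon$, on $B_\lambda$.

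The heart of the argument is the calculation that $\tilde\varepsilon = \varepsilon$, which generalizes formulas \eqref{ep}--\eqref{ep1}. For a monomial $\beta = v\cdot p(X_i, Y_j)\cdot w \in B_\lambda$, $\varepsilon(\beta)$ can be read off directly from the quiver representation of $L$ at the point $p$ as a trace over $\bigoplus_i Le_i$. On the other hand, $\tilde\varepsilon(\beta) = \tr_{V\bold{e}}(\theta(\beta))$; using Lemma~\ref{L7}, $\theta(\beta)$ is the product of the $S_n$-symmetrization $\frac{\lambda_\infty}{n}(1+s_{12}+\cdots+s_{1n})$, a monomial of the form $\nu_{i_0}x_1\nu_{i_1}y_1\nu_{i_2}\cdots$ in $H_{k,c}$, and $\nu_0$. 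The fact that $V \cong \C\bGamma_n$ as a $\bGamma_n$-module (the wreath-product analogue of the statement $V \cong \C S_n$ used in the proof of Theorem~\ref{T2}) collapses the $S_n$-symmetrization to an $n$-fold trace on $V\nu_0$, while the decomposition of $V\nu_0$ into $\chi^i$-isotypic components for $\Gamma$ (picked out by the idempotents $\nu_i$ via \eqref{idd}) matches the vertex grading of $L$; the two expressions then agree.

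The main obstacle is the bookkeeping in this last step: as $\beta$ ranges over monomials in the $X_i, Y_j$, one must systematically track the interaction of the idempotents $\nu_i$ with $x_1, y_1$ inside $H_{k,c}$, and check that the normalization factor $\lambda_\infty/(mn) = k$ appearing in Lemma~\ref{L7} conspires with the $\Gamma$-grading on $V$ to reproduce exactly the weight functional predicted by the quiver data at $p$. Once $\tilde\varepsilon = \varepsilon$ is established, the bijectivity of the induced map $\Irr(H_{k,c}) \to \Irr(\Pi_\lambda,\alpha)$ and its agreement with the Etingof-Ginzburg construction both follow formally, exactly as in the proof of Theorem~\ref{T2}.
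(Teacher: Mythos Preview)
Your proposal is correct and follows exactly the approach the paper indicates: the authors state that the proofs in Section~\ref{QG} ``repeat almost verbatim the above arguments for the Weyl algebra,'' and your outline is precisely the wreath-product transcription of the proof of Theorem~\ref{T2}, with the idempotents $\nu_i$ playing the role of $\bar e$, Proposition~\ref{PCH} replacing the simplicity of $A_1$, and the comparison of weight functionals $\varepsilon$ and $\tilde\varepsilon$ carried out via Lemma~\ref{L7} just as \eqref{ep}--\eqref{ep1} were compared via Lemma~\ref{Lee2}.
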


As we pointed out earlier, there is a natural bijection between
$\, \bigsqcup_{n \ge 0} \Irr(\Pi_\lambda, \, \alpha) \,$ and  
the set $ \RR^{o}_{\tau} $ of isomorphism classes of stably free ideals of $O_{\tau}$  
(see Remark after Theorem \ref{TT1}). Thus, 
combining Theorems \ref{TT1} and \ref{TT2} together, we get
\begin{theorem}
\la{TT3}
The composition of functors
$$ 
\Mod(H_{k,c}) \rTo^{\bold{e}} \Mod(U_{k,c}) \rTo^{\theta_{\ast}} 
\Mod(B_\lambda) \rTo{j_{!}} \Mod(\Pi_\lambda) \rTo{i^{!}} 
\Mod(\S_\tau) \rTo{\ee} \Mod(O_{\tau}) 
$$
maps the set $ \Irr(H_{k,c}) $ of isomorphism classes of simple $H_{k,c}$-modules
injectively into the set of isomorphism classes of stably free 
ideals of $ O_{\tau} $. Collecting such maps for all $\,n \ge 0 \,$, 
we get a bijective correspondence $\, \bigsqcup_{n \ge 0}\Irr[H_{k,c}(\bGamma_{n})] 
\stackrel{\sim}{\to} \RR^{o}_{\tau} \,$.
\end{theorem}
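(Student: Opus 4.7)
The plan is to derive Theorem~\ref{TT3} as a formal corollary of Theorems~\ref{TT1} and~\ref{TT2}, together with the remark following Theorem~\ref{TT1}. The key observation is that the composite functor appearing in the statement of Theorem~\ref{TT3} is naturally isomorphic to $\Omega_\tau \circ \Xi_\tau$. Indeed, $\Xi_\tau$ terminates in $j_{!*}: \Mod(B_\lambda) \to \Mod(\Pi_\lambda)$ while $\Omega_\tau$ opens with $j^*: \Mod(\Pi_\lambda) \to \Mod(B_\lambda)$, so tracing the diagram we encounter the composite $j^* \circ j_{!*}$ in the middle. By Lemma~\ref{Linj}$(a)$, this composite is naturally isomorphic to $\id_{\Mod(B_\lambda)}$, and therefore $\Omega_\tau \circ \Xi_\tau$ reduces precisely to $\ee \circ i^! \circ j_! \circ \theta_* \circ \mathbf{e}$, which is the functor displayed in Theorem~\ref{TT3}.

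Next I would apply Theorem~\ref{TT2} with dimension vector $\bn = (n,n,\ldots,n)$ and $\lambda = (-\tau\cdot\bn,\,\tau)$ to obtain a bijection $\Irr[H_{k,c}(\bGamma_n)] \stackrel{\sim}{\to} \Irr(\Pi_\lambda,\,\alpha)$ for $\alpha = (1,\bn)$. Chaining this with the injection $\Irr(\Pi_\lambda,\,\alpha) \hookrightarrow \RR_\tau$ supplied by Theorem~\ref{TT1} immediately yields the injectivity part of Theorem~\ref{TT3} for each fixed $n$. For the bijection statement, I would collect these injections over all $n \ge 0$ and invoke the remark following Theorem~\ref{TT1}: for the particular family $\bn = (n,n,\ldots,n)$, the $\Omega_\tau$-maps collectively provide a bijection $\bigsqcup_{n\ge 0} \CC_{\bn,\tau}(\Gamma) \stackrel{\sim}{\to} \RR^o_\tau$ onto the stably free ideals, confirming the Crawley-Boevey--Holland conjecture. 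Combining this with the $\Xi_\tau$-bijections over $n$ gives the desired $\bigsqcup_{n\ge 0}\Irr[H_{k,c}(\bGamma_n)] \stackrel{\sim}{\to} \RR^o_\tau$.

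The argument is thus almost purely formal: the substantive input—the recollement data from Proposition~\ref{P1} (in its quiver generalization), the Morita equivalence $\ee$ from Proposition~\ref{PCH}, the explicit algebra map $\theta$ from Lemma~\ref{L7}, and the matching of the induced maps with the bijections of \cite{EG, BGK1, E}—is all packaged into Theorems~\ref{TT1} and~\ref{TT2}. The only conceptual step one must not overlook is the identity $j^* \circ j_{!*} \cong \id$ from Lemma~\ref{Linj}$(a)$, which is what makes Theorem~\ref{TT3} fall out as a clean composition rather than some more cumbersome functorial assertion. A small consistency check also deserves mention: the constraint $\lambda_\infty = -n\sum_i\lambda_i$ imposed in Lemma~\ref{L7} is compatible with the convention $\lambda_\infty = -\tau\cdot\bn$ used in Theorems~\ref{TT1} and~\ref{TT2} precisely because $\bn = (n,\ldots,n)$, so that $\tau\cdot\bn = n\sum_i\tau_i$; this ensures that the parameters of $H_{k,c}$ and $\Pi_\lambda$ glue consistently across the diagram.
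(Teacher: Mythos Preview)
Your proposal is correct and follows exactly the approach the paper intends: Theorem~\ref{TT3} is stated as an immediate combination of Theorems~\ref{TT1} and~\ref{TT2} (just as Theorem~\ref{T3} is obtained from Theorems~\ref{T1} and~\ref{T2}), and the paper omits the details with the remark that the cyclic case ``repeat[s] almost verbatim the above arguments.'' In fact you supply more than the paper does, making explicit the identity $j^* j_{!*} \cong \id$ from Lemma~\ref{Linj}$(a)$ that glues $\Omega_\tau \circ \Xi_\tau$ into the displayed functor, and checking the parameter compatibility between Lemma~\ref{L7} and the hypotheses of Theorems~\ref{TT1}--\ref{TT2}.
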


\bibliographystyle{amsalpha}

\end{document}